\numberwithin{equation}{section} 
\newcommand{\bea}{\begin{eqnarray}}
\newcommand{\eea}{\end{eqnarray}}
\newcommand{\ba}{\begin{array}}
\newcommand{\ea}{\end{array}}
\newcommand{\edc}{\end{document}}
\newcommand{\bc}{\begin{center}}
\newcommand{\ec}{\end{center}}
\newcommand{\be}{\begin{equation}}
\newcommand{\ee}{\end{equation}}
\def\bc{{\mathbb C}}
\def\br{{\mathbb R}}
\newtheorem{thm}{Theorem}[section]
\newtheorem{cor}[thm]{Corollary}
\newtheorem{prop}[thm]{Proposition}
\newtheorem{defin}[thm]{Definition}
\theoremstyle{remark}
\begin{document}

\title[The strong "zero-two" law ]{The strong "zero-two" law  for positive contractions of  Banach-Kantorovich $L_p$-lattices}
\author{Inomjon Ganiev}
\address{Inomjon Ganiev\\
 Department of Science in Engineering\\
Faculty of Engineering, International Islamic University Malaysia\\
P.O. Box 10, 50728\\
Kuala-Lumpur, Malaysia}  \email{{\tt inam@iium.edu.my,
ganiev1@rambler.ru}}

\author{Farrukh Mukhamedov}
\address{Farrukh Mukhamedov\\
 Department of Computational \& Theoretical Sciences\\
Faculty of Science, International Islamic University Malaysia\\
P.O. Box, 141, 25710, Kuantan\\
Pahang, Malaysia} \email{{\tt farrukh\_m@iium.edu.my}, {\tt
far75m@yandex.ru}}

\author{Dilmurad Bekbaev}
\address{Dilmurad Bekbaev\\
 Department of Computational \& Theoretical Sciences\\
Faculty of Science, International Islamic University Malaysia\\
P.O. Box, 141, 25710, Kuantan\\
Pahang, Malaysia}

\begin{abstract}
In the present paper we study majorizable operators acting on
Banach-Kantorovich $L_p$-lattices, constructed by a measure $m$
with values in the ring of all measurable functions. Then using
methods of measurable bundles of Banach-Kantorovich lattices, we
prove the strong "zero-two" law for positive contractions of the
Banach-Kantorovich $L_p$-lattices.
 \vskip 0.3cm \noindent
{\it Mathematics Subject Classification}: 37A30, 47A35, 46B42, 46E30, 46G10.\\
{\it Key words and phrases}: Banach-Kantorovich $L_p$-lattice,
strong "zero-two" law, positive contraction.

\end{abstract}

\maketitle

\section{Introduction}

Starting from  von Neumman's \cite{vN} pioneering work, the
development of the theory of Banach bundles had been stimulated by
many works (see for example \cite{G1,G2}). There are many papers
were devoted to the applications of this theory to several
branches of analysis \cite{AAK2007,K1,K2,Wo}. Moreover, this
theory is well-connected with the theory of vector-valued Banach
spaces \cite{Gi,G1}, which has several applications (see for
example, \cite{LL}). In the present paper, we concentrate
ourselves to the theory of Banach bundles of $L_0$-valued Banach
spaces (see for more details \cite{Ga3,G1}). Note that such spaces
are called {\it Banach--Kantorovich spaces}. In \cite{G1,G2,K2})
the theory of Banach--Kantorovich spaces were developed. It is
known \cite{G1} that the theory of measurable bundles of Banach
lattices is sufficiently well explored. Therefore, it is natural
to employ methods of measurable bundles of such spaces to
investigate functional properties of Banach--Kantorovich spaces.
It is an effective tool which gives a good opportunity to obtain
various properties of these spaces \cite{Ga1,Ga2}. For example, in
\cite{GaC,Ga3} Banach-Kantorovich lattice $L_{p}({\nabla},{\mu})$
is represented as a measurable bundle of classical $L_{p}$
-lattices. Naturally, these functional Banach--Kantorovich spaces
have many similar properties like the classical ones, constructed
by the real valued measures. In \cite{CGa,GM2} this allowed to
establish several weighted ergodic theorems for positive
contractions of $L_{p}({\nabla},{\mu})$-spaces. In \cite{Ga2} the
convergence theorems of martingales on such lattices has been
proved. Some other applications of the measurable bundles of
Banach-Kantorovich spaces can be found in \cite{AAK2007,GM3}.

In \cite{OS} Ornstein and Sucheston  proved that, for any positive
contraction $T$ on an $L_1$-space, one has either
$\|T^n-T^{n+1}\|_1=2$ for all $n$ or
$\lim\limits_{n\to\infty}\|T^n-T^{n+1}\|_1=0$. An extension of
this result to positive operators on $L^{\infty}$-spaces was given
by Foguel \cite{F}. In \cite{Z1} Zahoropol generalized these
results, called {\it "zero-two" laws}, and  his result can be
formulated as follows:

\begin{thm}\label{A}  Let $T$ be a positive contraction of
$L_p$, $p>1,p\neq 2$. If  the following relation holds
$\big\||T^{m+1}-T^m|\big\|<2$ for some $m\in
{\mathbb{N}}\cup\{0\}$, then
$$
\lim_{n\to\infty}\|T^{n+1}-T^n\|=0.
$$
\end{thm}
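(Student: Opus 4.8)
The plan is to reduce the statement to the convergence to zero of a monotone sequence of modulus norms, and then to extract that decay from the geometry of $L_p$ for $p\neq2$. First I would set $d_n=\big\||T^{n+1}-T^n|\big\|$. Because $T$ is positive one has $|TS|\le T|S|$ for every regular operator $S$, and writing $T^{n+1}-T^n=T(T^n-T^{n-1})$ gives $|T^{n+1}-T^n|\le T|T^n-T^{n-1}|$; since $T$ is a contraction this yields $d_n\le d_{n-1}$. Hence $(d_n)$ is non-increasing, $d_n\downarrow d$ for some $d\ge0$, and the hypothesis $d_m<2$ forces $d<2$. As $\|S\|\le\||S|\|$ in any Banach lattice, it suffices to prove $d=0$.

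Next I would convert the size of $d_n$ into an overlap condition peculiar to $L_p$. For $f\ge0$ with $\|f\|_p=1$ put $a_n=T^nf\ge0$ and $w_n=a_n\wedge a_{n+1}$. Since $a_n-w_n$ and $a_{n+1}-w_n$ are disjoint and the $L_p$-norm is additive on disjoint positive elements,
\[
\|a_{n+1}-a_n\|_p^p=\|a_n\|_p^p+\|a_{n+1}\|_p^p-2\|w_n\|_p^p\le 2-2\|w_n\|_p^p .
\]
Thus $d_n$ is near $2$ exactly when the overlaps $w_n$ are small, and $d<2$ means that the overlaps stay bounded below along near-extremal test functions.

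The decay is captured by applying $T$ once more. With $(a_n-a_{n+1})^+=a_n-w_n$, $(a_n-a_{n+1})^-=a_{n+1}-w_n$, and $r=T(a_n-w_n)$, $s=T(a_{n+1}-w_n)$, the same disjointness bookkeeping gives
\[
\|a_{n+1}-a_{n+2}\|_p^p=\|T(a_n-a_{n+1})\|_p^p\le\|a_n-a_{n+1}\|_p^p-2\|r\wedge s\|_p^p .
\]
The term $r\wedge s$ is the overlap $T$ manufactures out of two disjoint functions, and it is exactly the engine of strict decrease. The goal is then to show that a lower bound on $\|w_n\|_p$ forces a uniform lower bound on $\|r\wedge s\|_p$, yielding $d_{n+1}\le\phi(d_n)$ for a continuous $\phi$ with $\phi(t)<t$ on $(0,2)$ and $\phi(0)=0$; letting $n\to\infty$ then gives $d\le\phi(d)$, whence $d=0$.

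I expect this last implication to be the main obstacle. What forces $T$ to create overlap — equivalently, forbids $T$ from acting as a disjointness-preserving (Lamperti) operator on the relevant bands — is the rigidity of $L_p$ for $p\neq2$. A clean route is to invoke Akcoglu's dilation theorem to reduce a positive contraction to a positive isometry and then the Banach--Lamperti theorem, by which the surjective isometries of $L_p$, $p\neq2$, are weighted composition operators; separating the ``asymptotically disjointness-preserving'' case $d_n\equiv2$ from the decaying case is precisely where $p\neq2$ is used, quantitatively via Clarkson's inequalities, which are strict away from $p=2$ (at $p=2$ the parallelogram law turns these into equalities and the method collapses, matching the exclusion of $p=2$). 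Two points need care: passing from the pointwise identities to the operator modulus requires the Riesz--Kantorovich formula $|T^{n+1}-T^n|f=\sup_{0\le g\le f}(T^{n+1}g-T^ng)+\sup_{0\le h\le f}(T^nh-T^{n+1}h)$, so extremal test functions must be chosen for $|T^{n+1}-T^n|$ rather than for $T^{n+1}-T^n$ itself; and the lower bound on the overlaps must be made uniform in $n$, combining $d_n\downarrow d$ with a uniform-convexity/compactness argument.
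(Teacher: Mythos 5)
This statement is not proved in the paper at all: it is Zaharopol's theorem, quoted as background from \cite{Z1}, and the paper's own contribution (Section 4) sidesteps it entirely by representing $T$ as a measurable bundle of positive contractions $T_\omega$ on classical $L_p$-spaces and invoking the scalar strong law (Theorem \ref{D}, Wittmann \cite{W}) fibrewise. So your proposal has to stand on its own, and it does not. The preparatory reductions are correct and standard: $d_n=\big\||T^{n+1}-T^n|\big\|$ is non-increasing because $|TS|\le T|S|$ for positive $T$; the identity $\|u-v\|_p^p=\|u\|_p^p+\|v\|_p^p$ for disjoint $u,v\ge 0$ together with superadditivity of $t\mapsto t^p$ gives both displayed inequalities; and $\|S\|\le\big\||S|\big\|$ reduces everything to $d_n\to 0$. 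But the step you defer --- the existence of a continuous $\phi$ with $\phi(t)<t$ on $(0,2)$, $\phi(0)=0$, and $d_{n+1}\le\phi(d_n)$ --- \emph{is} the theorem; nothing before it uses either positivity-plus-modulus in a quantitative way or $p\neq 2$. Worse, the bridging claim as you state it is false: for any disjointness-preserving positive contraction (a composition operator, say one induced by an irrational rotation) one has $r\wedge s=T(a_n-w_n)\wedge T(a_{n+1}-w_n)=0$ identically, no matter how large the overlaps $w_n$ are. Such operators are of course excluded by the hypothesis $d_m<2$ (for them $\big\||T^{m+1}-T^m|\big\|=2$), but that is precisely the point: the hypothesis on the \emph{modulus} norm must be fed into the overlap argument, and your sketch never shows how. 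Note also that your claim that ``$d_n$ is near $2$ exactly when the overlaps are small'' only holds in one direction from the inequality you derive.

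The tools you name for the missing step do not obviously supply it. Akcoglu's dilation gives $T^n=PQ^nD$ with $Q$ a positive invertible isometry, but differences and moduli do not pass usefully through this factorization: since $Q$ is a lattice isomorphism (this is where Banach--Lamperti enters), $|Q^{n+1}-Q^n|=Q^n|Q-I|$, so $\big\||Q^{n+1}-Q^n|\big\|=\big\||Q-I|\big\|$ is \emph{constant} in $n$ and is generically equal to $2$ even when $d_n(T)\to 0$ (e.g.\ for averaged shifts); the dilation therefore erases exactly the quantity you are trying to control, and no mechanism is given for transferring the hypothesis $d_m<2$ from $T$ to $Q$ and back. Clarkson's inequalities are invoked only as a slogan for ``where $p\neq 2$ is used,'' with no inequality actually connecting them to a lower bound on $\|r\wedge s\|_p$. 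Finally, the issue you flag yourself --- that near-extremal test functions must be chosen for the modulus operator $|T^{n+1}-T^n|$ via the Riesz--Kantorovich formula, not for the difference $T^{n+1}-T^n$ --- is real and left unresolved. In short: correct standard bookkeeping, but the heart of the proof is missing, and the proposed route to it would not work as described.
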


In \cite{KT} this theorem was established for K\"{o}the spaces. In
particularly, from that result it follows the statement of the
theorem for a case $p=2$.

Furthermore,  the strong "zero-two" law for positive contractions
of $L_p$-spaces, $1 \leq p < +\infty$ was proved in \cite{W}. This
result is formulated as follows:

\begin{thm}\label{D}  Let $1\leq p < + \infty$ and $T$ be a
positive contraction of $L_p$. If $\big\||T^{m+1}-T^m|\big\|<2$
for some $m\in {\mathbb{N}}\cup\{0\}$, then
$$
\lim_{n\to\infty}\big\||T^{n+1}-T^n|\big\|=0. $$
\end{thm}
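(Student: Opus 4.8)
Throughout, write $a_n=\big\||T^{n+1}-T^n|\big\|$ and let $R_n=|T^{n+1}-T^n|$ denote the corresponding positive operator on $L_p$. The plan is first to reduce the assertion to a statement about the single sequence $(a_n)$. Since $T\ge0$, for every regular operator $S$ one has $|TS|\le T|S|$; applying this to $S=T^n-T^{n-1}$ together with the factorization $T^{n+1}-T^n=T(T^n-T^{n-1})$ yields $R_n\le TR_{n-1}$, and hence $a_n\le\|T\|\,a_{n-1}\le a_{n-1}$. Thus $(a_n)$ is non-increasing and converges to some $a\in[0,2]$, the bound $a_n\le 2$ following from $R_n\le T^{n+1}+T^n$. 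Because the sequence decreases, the hypothesis $a_m<2$ forces $a\le a_m<2$. Everything therefore reduces to the dichotomy: \emph{if $a<2$ then $a=0$}, i.e. the limit of $(a_n)$ can only be $0$ or $2$.

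For the dichotomy I would exploit the lattice structure of $R_n$. Writing $C_n=T^{n+1}\wedge T^n\ge0$ for the common part, the Riesz identity $|A-B|=A+B-2(A\wedge B)$ gives
\[
R_n=T^{n+1}+T^n-2C_n .
\]
Thus $a_n<2$ is precisely the statement that the overlap $C_n$ is non-trivial, and proving $a_n\to 0$ amounts to showing that this overlap becomes, asymptotically, ``full''. The overlaps propagate under $T$: positivity gives $T(T^n\wedge T^{n-1})\le T^{n+1}\wedge T^n$, that is $C_n\ge TC_{n-1}$. This monotone mechanism is what one expects to drive the overlaps upward once they are positive, and it is the $L_p$-analogue of the Ornstein--Sucheston overlap estimate on $L_1$.

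The crux, and the place I expect the main obstacle, is turning this heuristic into decay of $a_n$. For $p=1$ the operator norm of a regular operator equals the norm of its modulus, so $a_n=\|T^{n+1}-T^n\|$ and the dichotomy is exactly the classical Ornstein--Sucheston zero--one argument for the overlaps $C_n$. For $1<p<\infty$, $p\neq2$, the hypothesis $a_m<2$ is precisely that of Theorem~\ref{A}, which already yields $\|T^{n+1}-T^n\|\to0$ in the \emph{operator} norm; the residual and hardest step is to upgrade this to convergence in the \emph{modulus} norm, i.e. to pass from $T^{n+1}-T^n\to0$ to $R_n\to0$. I would attempt this by combining the iterated domination $R_n\le T^{n-k}R_k$ with the operator-norm smallness of the differences and the growth of the overlaps $C_n$ to squeeze $\|R_n\|$ to $0$, while the case $p=2$ is supplied by the K\"othe-space result of \cite{KT}. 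The genuine difficulty is that, unlike on $L_1$, the $L_p$-norm does not split over the decomposition $R_n=P_n+Q_n$ into the positive and negative parts of $T^{n+1}-T^n$: these parts can remain large even when their difference is small, so controlling their near-cancellation is exactly where the real work lies.
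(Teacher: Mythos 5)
Your proposal is not a proof; it is an accurate map of where the difficulty lies, followed by an acknowledgement that the difficulty remains. The preliminary reductions are correct: $|T^{n+1}-T^n|\le T|T^n-T^{n-1}|$ gives $R_n\le TR_{n-1}$, hence $a_n\le a_{n-1}\le 2$, so the hypothesis $a_m<2$ propagates; and the case $p=1$ does close, since additivity of the $L_1$-norm on positive elements yields $\big\||S|\big\|=\|S\|$ for regular $S$ on $L_1$, after which Ornstein--Sucheston \cite{OS} applies verbatim. But for $1<p<\infty$ the entire content of the theorem is exactly the step you leave open: passing from $\|T^{n+1}-T^n\|\to 0$ (which Theorem \ref{A}, or \cite{KT} for $p=2$, provides) to $\big\||T^{n+1}-T^n|\big\|\to 0$. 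The tools you propose for that upgrade cannot accomplish it. The iterated domination $R_n\le T^{n-k}R_k$ only reproduces the monotonicity $a_n\le a_k$ you already have; the overlap recursion $C_n\ge TC_{n-1}$ gives a lower bound on $C_n$ in operator order but no upper bound on $\|R_n\|$; and, as you yourself observe, for $p>1$ the operator-norm smallness of $T^{n+1}-T^n$ expresses near-cancellation of its positive and negative parts while saying nothing about the size of their sum $|T^{n+1}-T^n|$. So the dichotomy ``if $a<2$ then $a=0$'' for the modulus sequence is asserted, not established, and your argument terminates precisely at the theorem's actual content.

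For the comparison you were asked to make: the paper contains no proof of this statement at all. Theorem \ref{D} is imported as a known result of Wittmann \cite{W}, and is used in the paper only as a black box, applied fibrewise to the contractions $T_\omega$ of Theorem \ref{21} in the proof of the main theorem of Section 4. Wittmann's theorem is not obtained in the literature as a soft consequence of the weak ``zero-two'' law of Zaharopol \cite{Z1}; it is proved by a direct quantitative argument on the moduli themselves. If you intend to complete your outline, you should either reproduce such a direct argument or find a genuine mechanism by which the overlap growth $C_n\ge TC_{n-1}$ forces $\|T^{n+1}+T^n-2C_n\|\to 0$ in the regular norm; at present no such mechanism appears in your text.
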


In \cite{GM} we have generalized Theorem \ref{A} for the positive
contractions of the Banach-Kantorovich $L_p$-lattices. Namely, the
following result was proved.

\begin{thm}\label{B} Let $T: L_p({\nabla},m)\to L_p({\nabla},m)$,
$p>1, p\neq 2$ be a positive linear contraction such that
$T\mathbf{1}\leq\mathbf{1}$. If one had
$\big\||T^{m+1}-T^m|\big\|<2\cdot\mathbf{1}$ for some
$m\in{\mathbb{N}}\cup\{0\}$. Then
$$
(o)-\lim_{n\to\infty}\|T^{n+1}-T^{n}\|=0.
$$
\end{thm}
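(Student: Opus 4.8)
The plan is to reduce the $L_0$-valued assertion to the scalar Theorem \ref{A} by disintegrating everything along a measurable bundle. Following \cite{GaC,Ga3}, I would represent $L_p(\nabla,m)$ as the lattice of (lifted) measurable sections of a measurable bundle $\{L_p(\omega)\}_{\omega\in\Omega}$ of classical $L_p$-lattices, so that each $f\in L_p(\nabla,m)$ corresponds to a section $\omega\mapsto\hat f(\omega)\in L_p(\omega)$, the unit $\mathbf 1$ to the section $\omega\mapsto\mathbf 1_\omega$, and the $L_0$-valued norm to the fibrewise scalar norm, $\|f\|(\omega)=\|\hat f(\omega)\|_{L_p(\omega)}$ for a.e.\ $\omega$. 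The whole argument rests on transporting the operator $T$ and the inequalities governing it into this fibrewise picture.

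The central step, and the main obstacle, is to disintegrate $T$. Using the theory of majorizable operators from the earlier sections, I would produce a measurable family $\{T_\omega\}_{\omega\in\Omega}$ of bounded linear operators $T_\omega\colon L_p(\omega)\to L_p(\omega)$ with $\widehat{Tf}(\omega)=T_\omega\hat f(\omega)$ a.e.\ for every $f$, and then verify that this disintegration is compatible with all the relevant structure: positivity of $T$ gives positivity of almost every $T_\omega$; the condition $T\mathbf 1\le\mathbf 1$ gives $T_\omega\mathbf 1_\omega\le\mathbf 1_\omega$ a.e.; $L_0$-contractivity descends to $\|T_\omega\|\le 1$ a.e.; and, decisively, the disintegration commutes with composition and with the operator modulus, so that $(T^k)_\omega=(T_\omega)^k$ and $|T^{m+1}-T^m|_\omega=|T_\omega^{m+1}-T_\omega^m|$ a.e. The delicate points here are that the modulus of a majorizable operator disintegrates to the fibrewise moduli, and that the countably many ``a.e.'' statements (one for each power $k$ and each $n$) can be arranged to hold off a single null set, which is where a separability argument for the bundle is needed.

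Granting this, the proof finishes by a fibrewise application of Theorem \ref{A}. The hypothesis $\big\||T^{m+1}-T^m|\big\|<2\cdot\mathbf 1$ reads fibrewise as $\big\||T_\omega^{m+1}-T_\omega^m|\big\|_{L_p(\omega)}<2$ for a.e.\ $\omega$; since $T_\omega$ is, for such $\omega$, a positive contraction of the classical lattice $L_p(\omega)$ with $p>1,\ p\neq 2$, Theorem \ref{A} yields $\lim_{n\to\infty}\|T_\omega^{n+1}-T_\omega^n\|_{L_p(\omega)}=0$. Through the identification $\|T^{n+1}-T^n\|(\omega)=\|T_\omega^{n+1}-T_\omega^n\|_{L_p(\omega)}$ this means the measurable functions $\omega\mapsto\|T^{n+1}-T^n\|(\omega)$ converge to $0$ for a.e.\ $\omega$. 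As $T$ is a contraction they are order-bounded by $2\cdot\mathbf 1$, and for dominated sequences in $L_0$ almost-everywhere convergence coincides with $(o)$-convergence (set $p_k=\sup_{n\ge k}\|T^{n+1}-T^n\|$, so that $p_k\downarrow 0$); hence $(o)-\lim_{n\to\infty}\|T^{n+1}-T^n\|=0$, as claimed.
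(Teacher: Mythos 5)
Your proposal is correct and is essentially the paper's own method: the statement is proved (in \cite{GM}, and the same scheme is reproduced for the strong version in Section 4 of this paper) precisely by disintegrating $T$ into fibrewise positive contractions $T_\omega$ (Theorem~\ref{21}), identifying $\big\||T^{m+1}-T^m|\big\|(\omega)$ with $\big\||T_\omega^{m+1}-T_\omega^m|\big\|_{p,\omega}$ a.e.\ through the majorizable-operator machinery (Proposition~\ref{41}, Corollary~\ref{42}), applying the scalar ``zero-two'' law in each fibre, and converting the resulting a.e.\ convergence of the $L_0(\Omega)$-valued norms into $(o)$-convergence. The only difference is which scalar theorem is invoked fibrewise --- Theorem~\ref{A} here, versus Theorem~\ref{D} for the strong law --- so your argument tracks the paper's proof step for step.
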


The main aim of this paper is to prove the strong "zero-two" law
for the positive contractions of the Banach-Kantorovich lattices
$L_p(\nabla,m)$. To establish the main aim, we first study
majorizable operators acting on Banach-Kantorovich $L_p$-lattices
(see Section 3). Then using methods of measurable bundles of
Banach-Kantorovich lattices, in section 4 we prove the main result
of the present paper.

\section{Preliminaries}

Let $(\Omega,\Sigma,\mu)$ be a complete measure space with a
finite measure $\mu$. By $\mathcal{L}(\Omega)$ (resp.
$\mathcal{L}_\infty(\Omega)$ ) we denote the set of all (resp.
essentially bounded) measurable real functions defined on $\Omega$
a.e. By the standard way, we introduce an equivalence relation on
$\mathcal{L}(\Omega)$ by putting $f \sim g$ whenever $f = g$ a.e.
The set $L_0(\Omega)$ of all cosets $f^\sim = \{g \in
\mathcal{L}(\Omega): f \sim g\}$, endowed with the natural
algebraic operations, is an algebra with unit $\mathbf{1}(\omega)=
1$ over the field of reals $\br$. Moreover, with respect to the
partial order $f^\sim\leq g^\sim $ $\Leftrightarrow$ $f \leq g$
a.e., the algebra $L_0(\Omega)$ is a Dedekind complete Riesz space
with weak unit $\bf{1}$, and the set $B(\Omega) :=
B(\Omega,\Sigma,\mu)$ of all idempotents in $L_0(\Omega)$ is a
complete Boolean algebra. Furthermore, $L_\infty(\Omega) =
\{f^\sim: f\in \mathcal{L}_\infty(\Omega)\}$ is an order ideal in
$L_0(\Omega)$ generated by $\bf{1}$. In what follows, we will
write $f\in L_0(\Omega)$ instead of $f^\sim\in L_0(\Omega)$ by
assuming that the coset of $f$ is considered.

Let $E$ be a linear space over the real field $\mathbb{R}$. By
$\|\cdot\|$ we denote a $L_0(\Omega)$-valued norm on $E$. Then the
pair $(E,\|\cdot\|)$ is called a {\it lattice-normed space (LNS)
over $L_0(\Omega)$}. An LNS $E$ is said to be {\it
$d$-decomposable} if for every $x\in E$ and the decomposition
$\|x\|=f+g$ with $f$ and $g$ disjoint positive elements in
$L_0(\Omega)$ there exist $y,z\in E$ such that $x=y+z$ with
$\|y\|=f$, $\|z\|=g$.

Suppose that $(E,\|\cdot\|)$ is an LNS over $L_0(\Omega)$. A net
$\{x_\alpha\}$ of elements of $E$ is said to be {\it
$(bo)$-converging} to $x\in E$ (in this case we write
$x=(bo)$-$\lim x_\alpha$), if the net $\{\|x_\alpha - x\|\}$
$(o)$-converges to zero (here $(o)$-convergence means the order
convergence) in $L_0(\Omega)$ (written as $(o)$-$\lim \|x_\alpha
-x\|=0$). A net $\{x_\alpha\}_{\alpha\in A}$ is called {\it
$(bo)$-fundamental} if $(x_\alpha-x_\beta)_{(\alpha,\beta)\in
A\times A}$ $(bo)$-converges to zero.

An LNS in which every $(bo)$-fundamental net $(bo)$-converges is
called {\it $(bo)$-complete}. A {\it Banach-Kantorovich space
(BKS) over $L_0(\Omega)$} is a $(bo)$-complete $d$-decomposable
LNS over $L_0(\Omega)$. It is well known \cite{K1},\cite{K2} that
every BKS $E$ over $L_0(\Omega)$ admits an $L_0(\Omega)$-module
structure such that $\|fx\|=|f|\cdot\|x\|$ for every $x\in E,\
f\in L_0(\Omega)$, where $|f|$ is the modulus of a function $f\in
L_0(\Omega)$.
 A BKS $({\mathcal{U}},\|\cdot\|)$ is called a {\it
Banach-Kantorovich lattice} if  ${\mathcal{U}}$ is a vector
lattice and the norm $\|\cdot\|$ is monotone, i.e.
$|u_1|\leq|u_2|$ implies $\|u_1\|\leq\|u_2\|$. It is known
\cite{K1} that the cone ${\mathcal{U}}_+$ of  positive elements is
$(bo)$-closed.

 Let $\nabla$  be an arbitrary complete Boolean algebra and let
$X(\nabla)$ be the Stone space of $\nabla$. Assume that
$L_0(\nabla) := C_\infty(X(\nabla))$ be the algebra of all
continuous functions $x : X(\nabla)\rightarrow [-\infty,+\infty]$
that take the values $\pm\infty$ only on nowhere dense subsets of
$X(\nabla)$. Finally, by $C(X(\nabla))$ we denote the subalgebra
of all continuous real functions on $X(\nabla)$.

Given a complete Boolean algebra $\nabla$, let us consider a
mapping $m : {\nabla}\to L_0(\Omega)$. Such a mapping is called a
\textit{$L_0(\Omega)$-valued measure} if one has
\begin{enumerate}
\item[(i)]  $m(e)\geq 0$ for all $e\in{\nabla}$ and
$m(e)=0\Leftrightarrow e=0$; \item[(ii)]
 $m(e\vee g)=m(e)+m(g)$ if $\ e\wedge g=0, e,g\in{\nabla}$;

\item[(iii)] $m(e_\alpha)\downarrow 0$ for any net
$e_\alpha\downarrow 0$.
\end{enumerate}

Following  the well-known scheme of the construction of
$L_p$-spaces,  a space $L_p(\nabla,m)$ can be defined by
$$
L_p(\nabla,{m})=\left\{f\in L_0(\nabla): |f|_p:=\int| f|^pd{m} -
\textrm{exist} \ \right\}, \ \ \ p\geq 1
$$
where $m$ is a  $L_0(\Omega)$-valued measure on $\nabla$.

 A $L_0(\Omega)$-valued measure $m$ is said  to be  \textit{disjunctive
 decomposable ($d$-decomposable)}, if
for every $e\in \nabla$ and the decomposition $m(e)=a_1+a_2, \quad
a_1\wedge a_2=0, \quad a_i\in L_0(B)$ there exit $e_1,e_2\in
\nabla$ such that $e=e_1\vee e_2$ and $m(e_i)=a_i, i=1,2.$

\begin{thm}\label{22}\cite{Ga3} The following statements hold:
\begin{enumerate}
\item[(i)] The pair $(L_p(\nabla,m), |\cdot|_p)$ is
$(bo)$--complete lattice. Moreover, it is an ideal linear subspace
of $L_0(\nabla)$, i.e. from $|x|\leq |y|$, $y\in L_p(\nabla,m)$,
$x\in L_0(\nabla)$ it follows that $x\in L_p(\nabla,m)$ and
$|x|_p\leq|y|_p$;\\
\item[(ii)] If $0\leq x_\alpha\in L_p(\nabla,m)$ and
$x_\alpha\downarrow 0$, then $| x_\alpha|_p\downarrow 0$;\\
\item[(iii)] If the measure $m$ is $d$-decomposable, then $|
\alpha)x|_p=|\alpha|| x|_p$ for
 all $\alpha\in L_0(\Omega), x\in L_p(\nabla,m)$;\\

\item[(iv)] If the measure $m$ is $d$-decomposable, then
$(L_p(\nabla,m), |\cdot|_p)$
 is a Banach~---Kantorovich space;\\
\item[(v)] One has $L_\infty(\nabla,m):=C(X(\nabla))\subset
L_p(\nabla,m)\subset
 L_q(\nabla,m)$, $1\leq q\leq p.$ Moreover, $L_\infty(\nabla,m)$
 is  $(bo)$--dense in $(L_1(\nabla,m), \|\cdot\|_1).$
\end{enumerate}
\end{thm}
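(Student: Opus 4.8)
The plan is to develop the five assertions in parallel with the classical construction of $L_p$-spaces, systematically replacing the scalar integral by the $L_0(\Omega)$-valued integral $\int\cdot\,dm$ and scalar order limits by $(o)$-limits in $L_0(\nabla)$ and in $L_0(\Omega)$. First I would verify that $|\cdot|_p$ is a genuine $L_0(\Omega)$-valued norm. Positivity and faithfulness $|f|_p=0\Leftrightarrow f=0$ come from condition (i) on the measure ($m(e)=0\Leftrightarrow e=0$), homogeneity over $\br$ is immediate, and the triangle inequality amounts to an $L_0(\Omega)$-valued Minkowski inequality (the case $p=1$ being the integrated pointwise inequality $|f+g|\le|f|+|g|$). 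For $p>1$ I would first record the $L_0(\Omega)$-valued H\"older inequality, obtained by integrating against $m$ the Young inequality $ab\le a^p/p+b^q/q$ (with $q$ the conjugate exponent), which holds pointwise on the Stone space $X(\nabla)$ and hence as an inequality in $L_0(\nabla)$; the usual normalization then yields H\"older, and the classical derivation of Minkowski from H\"older goes through verbatim with $\int\cdot\,dm$ in place of the scalar integral. The ideal property in (i) is the easiest point: $|x|\le|y|$ forces $|x|^p\le|y|^p$, and monotonicity of $\int\cdot\,dm$ gives $\int|x|^p\,dm\le\int|y|^p\,dm$, so that $x\in L_p(\nabla,m)$ and $|x|_p\le|y|_p$.

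For the $(bo)$-completeness in (i) I would imitate the Riesz--Fischer argument. Starting from a $(bo)$-fundamental sequence I extract a subsequence whose consecutive $|\cdot|_p$-differences are dominated by a summable sequence in $L_0(\Omega)$; the Dedekind completeness of $L_0(\nabla)$ lets the corresponding telescoping series $(o)$-converge in $L_0(\nabla)$, and the order-continuity of $\int\cdot\,dm$ identifies the limit as an element of $L_p(\nabla,m)$ which is the $(bo)$-limit of the original sequence. Assertion (ii) is then an order-continuity statement that I would deduce directly from condition (iii) on the measure: if $0\le x_\alpha\downarrow0$ then $x_\alpha^p\downarrow0$ in $L_0(\nabla)$, and approximating $x_\alpha^p$ from below by simple functions reduces $\int x_\alpha^p\,dm\downarrow0$ to the hypothesis $m(e_\alpha)\downarrow0$ for $e_\alpha\downarrow0$, via the monotone convergence principle for the $L_0(\Omega)$-valued integral.

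The homogeneity (iii) is where the hypothesis of $d$-decomposability becomes indispensable, and I expect this to be the main obstacle, since it is the only step forcing the two Boolean algebras $B(\Omega)$ and $\nabla$ to interact. I would first establish $|\alpha x|_p=|\alpha|\,|x|_p$ for $\alpha$ an idempotent of $L_0(\Omega)$: using the decompositions of $m$-values furnished by $d$-decomposability, the action of $\alpha$ is realised as a splitting $e=e_1\vee e_2$ in $\nabla$, after which the identity follows from additivity of the integral. I would then extend to simple $\alpha$ by summing over disjoint idempotents, and finally to arbitrary $\alpha\in L_0(\Omega)$ by approximating $\alpha$ from below by simple functions and invoking the order-continuity proved in (ii). Assertion (iv) combines (i) and (iii): the space is already $(bo)$-complete, so only $d$-decomposability of the lattice-normed space remains. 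Given $|x|_p=f+g$ with $f\wedge g=0$, I note $\int|x|^p\,dm=|x|_p^p=f^p+g^p$ (disjointness gives $(f+g)^p=f^p+g^p$), and then, using the $L_0(\Omega)$-valued measure $e\mapsto\int_e|x|^p\,dm$ on $\nabla$ whose $d$-decomposability is inherited from that of $m$, I split the support of $x$ into disjoint $e_1,e_2\in\nabla$ with $\int_{e_i}|x|^p\,dm$ equal to $f^p$ and $g^p$, and put $y=\chi_{e_1}x$, $z=\chi_{e_2}x$.

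Finally, for (v) the inclusion $L_\infty(\nabla,m)\subset L_p(\nabla,m)$ follows from $|f|\le c\mathbf{1}$, which gives $\int|f|^p\,dm\le c^pm(\mathbf{1})\in L_0(\Omega)$; the inclusion $L_p(\nabla,m)\subset L_q(\nabla,m)$ for $q\le p$ I would read off from the $L_0(\Omega)$-valued H\"older inequality of the first paragraph applied with the finite total mass $m(\mathbf{1})$. The $(bo)$-density of $L_\infty(\nabla,m)$ in $L_1(\nabla,m)$ is the truncation argument: for $f\in L_1(\nabla,m)$ the cut-offs $f_n=(-n\mathbf{1})\vee(f\wedge n\mathbf{1})$ lie in $C(X(\nabla))=L_\infty(\nabla,m)$ and satisfy $|f-f_n|=(|f|-n\mathbf{1})^+\downarrow0$, whence $|f-f_n|_1\downarrow0$ by (ii), which is exactly $(bo)$-convergence. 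As an alternative route for all five items, one could pass to the representation of $L_p(\nabla,m)$ as a measurable bundle whose fibres are classical $L_p$-lattices and verify each assertion fibrewise, provided this representation is established independently of the present statement.
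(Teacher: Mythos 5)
You should know at the outset that the paper contains no proof of Theorem \ref{22}: it is quoted wholesale from \cite{Ga3}, and the only indication the paper gives of how it is proved there is that \cite{Ga3} establishes the measurable-bundle representation of $L_p(\nabla,m)$ (the pair $(X,L)$ is a measurable bundle of Banach lattices and $L_0(\Omega,X)$ is order isomorphic to $L_p(\nabla,m)$). So the fibrewise verification that you mention only in your closing sentence is, as far as one can tell from this paper, the route of the cited source, while your main argument --- redoing the classical $L_p$ construction with the $L_0(\Omega)$-valued integral in place of the scalar one --- is a genuinely different, intrinsic route. Your caveat about possible circularity of the bundle route is well placed, and most of your intrinsic steps are sound: the ideal property in (i), the reduction of (ii) to condition (iii) in the definition of the measure, the realisation of the action of idempotents of $B(\Omega)$ via $d$-decomposability in (iii), the splitting $y=\chi_{e_1}x$, $z=\chi_{e_2}x$ in (iv), and the truncation argument in (v).

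Two steps, however, have genuine gaps. First, H\"older ``by the usual normalization'' does not parse in this setting: normalizing means replacing $u$ by $u/|u|_p$, but $u$ lives in $L_0(\nabla)$ while $|u|_p$ lives in $L_0(\Omega)$, and a quotient mixing the two requires exactly the $L_0(\Omega)$-module structure which, in your own architecture, only becomes available through $d$-decomposability in part (iii) --- whereas Minkowski, hence H\"older, is needed already in part (i), where $d$-decomposability is not assumed. The standard repair avoids division: for every rational $s>0$ one has $|uv|\le \frac{s^p}{p}|u|^p+\frac{s^{-q}}{q}|v|^q$ pointwise on $X(\nabla)$; integrating against $m$ and then taking the infimum over $s$ in $L_0(\Omega)$ (a countable infimum, hence computed pointwise a.e.) yields $\int |uv|\,dm\le |u|_p\,|v|_q$, because for conjugate exponents the pointwise infimum of $\frac{s^p}{p}A+\frac{s^{-q}}{q}B$ equals $A^{1/p}B^{1/q}$. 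Second, $(bo)$-completeness in (i) is a statement about nets, while your Riesz--Fischer argument treats sequences, and for a general norming lattice sequential $(bo)$-completeness is not the same thing. Here the repair is the countable sup property of $L_0(\Omega)$ (the underlying measure $\mu$ is finite), which allows one to replace a $(bo)$-fundamental net by a sequence with the same order limit; this reduction has to be stated, not left implicit. With these two patches your intrinsic proof goes through.
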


Now we mention necessary facts  from the theory of measurable
bundles of Boolean algebras and Banach spaces (see \cite{G1} for
more details).

Let $(\Omega,\Sigma,\mu)$ be the same as above and  $X$ be a
mapping assigning an $L_p$-space constructed by a real-valued
measure $m_\omega$, i.e. $L_p(\nabla_\omega,m_\omega)$ to each
point $\omega\in\Omega$ and let
$$
L=\bigg\{\sum\limits_{i=1}^n \alpha_i e_i : \alpha_i\in
{\mathbb{R}}, \ \ e_i(\omega)\in\nabla_{\omega},\
i=\overline{1,n},\ n\in\mathbb{N}\bigg\}$$ be a set of sections.
In \cite{Ga3} it has been established that the pair $(X,L)$ is a
measurable bundle of Banach lattices and $L_0(\Omega,X)$ is modulo
ordered isomorphic to $L_p(\nabla,\mu)$.

Let $\rho$ be a lifting in $L_\infty(\Omega)$ (see \cite{G1}). As
before, let $\nabla$ be an arbitrary complete Boolean subalgebra
of $\nabla(\Omega)$ and $m$ be an $L_0(\Omega)$-valued measure on
$\nabla$. By $L_\infty(\nabla,m)$ we denote the set of all
essentially bounded functions w.r.t. $m$ taken from $L_0(\nabla)$.

A mapping $\ell : L_\infty(\nabla,m)(\subset L_{\infty}(\Omega,X))
\to {\mathcal{L}}_\infty (\Omega,X)$  is called a  {\it
vector-valued lifting} \cite{G1} associated with the lifting
$\rho$ if it satisfies the following conditions:
\begin{enumerate}
   \item[(1)] $\ell(\hat{u})\in\hat{u}$ for all $\hat{u}$ such that
   $dom(\hat{u})=\Omega$;
   \item[(2)]
   $\|\ell(\hat{u})\|_{L_p(\nabla_{\omega},m_\omega)}=\rho(|\hat{u}|_p)(\omega)$;
   \item[(3)] $\ell(\hat{u}+\hat{v})=\ell(\hat{u})+\ell(\hat{v})$
   for every $\hat{u},\hat{v}\in L_\infty(\nabla,m)$;
   \item[(4)] $\ell(h\cdot\hat{u})=\rho(h)\ell(\hat{u})$ for every
   $\hat{u}\in L_\infty(\nabla,m),\ h\in L_\infty(\Omega)$;
   \item[(5)] $\ell(\hat{u})\geq 0$ whenever $\hat{u}\geq 0$;
   \item[(6)] the set $\{\ell(\hat{u})(\omega) : \hat{u}\in
    L_\infty(\nabla,m)\}$ is dense in $X(\omega)$ for all $\omega\in\Omega$;
   \item[(7)]
   $\ell(\hat{u}\vee\hat{v})=\ell(\hat{u})\vee\ell(\hat{v})$
    for every $\hat{u},\hat{v}\in L_\infty(\nabla,m)$.
\end{enumerate}

In \cite{Ga3} the existence of the vector-valued lifting was
proved.

Let $L_p({\nabla},{m})$ ($p\geq 1$) be a Banach-Kantorovich
lattice. A linear mapping $T:L_p({\nabla},{m})\to
L_p({\nabla},{m})$ is called {\it positive} if $T\hat{f}\geq 0$
whenever $\hat{f}\geq 0$. We say that $T$ is a {\it
$L_0(\Omega)$-bounded mapping} if there exists a function $k\in
L_0(\Omega)$ such that $|T\hat{f}|_p\leq k |\hat{f}|_p$ for all
$\hat{f}\in L_p(\nabla,\mu)$. For such a mapping we can define an
element of $L_0(\Omega)$ as follows
$$
\|T\| =\sup\limits_{|\hat{f}|_p\leq\mathbf{1}} |T\hat{f}|_p,
$$
which is called an {\it $L_0(\Omega)$-valued norm} of $T$. A
mapping $T$ is said to be a {\it contraction} if one has
$\|T\|\leq\mathbf{1}$. Some examples of contractions can be found
in \cite{GM2}.

In the sequel we will need the following bundle representation of
$L_0(\Omega)$-linear $L_0(\Omega)$-bounded operators acting in
Banach-Kantorovich lattices.

\begin{thm}\label{21}\cite{GM} Let $L_p({\nabla},{m})$ ($p\geq 1$) be a Banach-Kantorovich
lattice, and $L_p(\nabla_\omega,m_\omega)$ be the corresponding
$L_p$-spaces constructed by real valued measures.  Let $T :
L_p({\nabla},{m})\to L_p({\nabla},{m})$ be a positive linear
contraction such that $T\mathbf{1}\leq\mathbf{1}$. Then for every
$\omega\in\Omega$ there exists a positive contraction $T_\omega :
L_p(\nabla_\omega,\mu_\omega)\to L_p(\nabla_\omega,m_\omega)$ such
that $T_\omega f(\omega) = (T\hat{f})(\omega)$ a.e. for every
$\hat{f}\in L_p({\nabla},{m})$.
\end{thm}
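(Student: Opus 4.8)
The plan is to build the fibre operators $T_\omega$ first on the dense subset of $X(\omega)=L_p(\nabla_\omega,m_\omega)$ supplied by the vector-valued lifting $\ell$, and then extend them by continuity. First I would note that $T$ maps $L_\infty(\nabla,m)$ into itself: if $|\hat u|\leq c\mathbf{1}$, then positivity of $T$ together with $T\mathbf{1}\leq\mathbf{1}$ gives $|T\hat u|\leq T|\hat u|\leq cT\mathbf{1}\leq c\mathbf{1}$, so $T\hat u\in L_\infty(\nabla,m)$. Hence for every $\hat u\in L_\infty(\nabla,m)$ both $\ell(\hat u)$ and $\ell(T\hat u)$ are everywhere-defined sections, and I would set
$$
T_\omega\big(\ell(\hat u)(\omega)\big):=\ell\big(T\hat u\big)(\omega).
$$

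The first genuine point is well-definedness at each individual $\omega$. By linearity of $\ell$ (property (3)) and property (2), for $\hat u,\hat v\in L_\infty(\nabla,m)$ one has $\big\|\ell(T\hat u)(\omega)-\ell(T\hat v)(\omega)\big\|_{X(\omega)}=\rho\big(|T(\hat u-\hat v)|_p\big)(\omega)$. Since $T$ is an $L_0(\Omega)$-bounded contraction, $|T(\hat u-\hat v)|_p\leq\|T\|\,|\hat u-\hat v|_p\leq|\hat u-\hat v|_p$, and because $\rho$ is a lifting (hence positive and monotone) this yields
$$
\rho\big(|T(\hat u-\hat v)|_p\big)(\omega)\leq\rho\big(|\hat u-\hat v|_p\big)(\omega)=\big\|\ell(\hat u)(\omega)-\ell(\hat v)(\omega)\big\|_{X(\omega)}.
$$
Thus $\ell(\hat u)(\omega)=\ell(\hat v)(\omega)$ forces $\ell(T\hat u)(\omega)=\ell(T\hat v)(\omega)$, so $T_\omega$ is well-defined, and the same inequality shows that it is a contraction on its domain. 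Linearity over $\mathbb{R}$ follows from properties (3) and (4) (real scalars are constant functions), positivity from property (5) together with properties (6) and (7), which identify the closure of $\{\ell(\hat u)(\omega):\hat u\geq 0\}$ with the positive cone of $X(\omega)$, and $T_\omega\mathbf{1}_\omega\leq\mathbf{1}_\omega$ (where $\mathbf{1}_\omega=\ell(\mathbf{1})(\omega)$) from $T\mathbf{1}\leq\mathbf{1}$.

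Next, property (6) guarantees that the domain $\{\ell(\hat u)(\omega):\hat u\in L_\infty(\nabla,m)\}$ is dense in the Banach space $X(\omega)=L_p(\nabla_\omega,m_\omega)$, so the uniformly bounded linear map $T_\omega$ extends uniquely and continuously to all of $L_p(\nabla_\omega,m_\omega)$, retaining positivity and the contraction property. By construction $T_\omega f(\omega)=(T\hat f)(\omega)$ already holds for every $\hat f\in L_\infty(\nabla,m)$ with $f=\ell(\hat f)$. To reach an arbitrary $\hat f\in L_p(\nabla,m)$, I would approximate it by a sequence from $L_\infty(\nabla,m)$ using the density in Theorem \ref{22}(v), apply the $L_0(\Omega)$-continuity of $T$, and pass to an a.e.\ convergent subsequence on the fibres.

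I expect the main obstacle to be precisely this last transfer step: the identity $T_\omega f(\omega)=(T\hat f)(\omega)$ must hold a.e.\ \emph{simultaneously} for all $\hat f\in L_p(\nabla,m)$, and since this is an uncountable family one cannot simply take the union of the exceptional null sets. The resolution is to work with a countable $(bo)$-dense family of sections and to exploit the compatibility of $\ell$ with the $L_0(\Omega)$-module structure (property (4)), so that a single common null set suffices; checking that the fibrewise extension is genuinely independent of the approximating sequence and is measurable in $\omega$ is the technical heart of the argument.
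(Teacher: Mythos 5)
Your proposal is correct and follows essentially the same route as the original: the paper itself imports this theorem from \cite{GM} without reproving it, but the construction there (and its replica in the paper's proof of Theorem \ref{33}) is exactly your scheme --- define $T_\omega$ on the lifted sections by $T_\omega(\ell(\hat u)(\omega)):=\ell(T\hat u)(\omega)$, use properties (2)--(7) of the vector-valued lifting together with $\|T\|\leq\mathbf{1}$ and $T\mathbf{1}\leq\mathbf{1}$ to get well-definedness, positivity and the contraction bound on the dense domain, extend by continuity, and then transfer to all of $L_p(\nabla,m)$ by $(bo)$-approximation from $L_\infty(\nabla,m)$. Your only over-caution is the final quantifier worry: in the statement the null set is allowed to depend on $\hat f$, so for each fixed $\hat f$ a truncation sequence $\hat u_n=(\hat f\wedge n\mathbf{1})\vee(-n\mathbf{1})$ and a.e.\ convergence of $(o)$-convergent sequences in $L_0(\Omega)$ already suffice, without needing a countable dense family.
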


\section{Majorizable operators in Banach-Kantorovich $L_p$-lattices}

In this section, we are going to study majorizable operators in
Banach-Kantorovich $L_p$-lattices.

\begin{thm}\label{31} Let $T: L_1(\nabla,m)\rightarrow
 L_1(\nabla,m)$ be an $L_0(\Omega)$- bounded linear operator in
 Banach-Kantorovich lattice $L_1(\nabla,m)$. Then there exists a unique $|T|$- $L_0(\Omega)$- bounded linear operator in
 $L_1(\nabla,m)$ such that
\begin{enumerate}
\item[(a)] $\|T\|=\| |T| \|;$

\item[(b)] one has $ |T\hat{f}|\leq |T||\hat{f}|$, for all
$\hat{f}\in L_1(\nabla,m)$;

\item[(c)] for each $\hat{f}\in L_1(\nabla,m)$ with $\hat{f}\geq0$
one has $|T|\hat{f}=\sup\{|T\hat{g}|: \hat{g}\in L_1(\nabla,m),
|\hat{g}|\leq \hat{f}\ \};$

\item[(d)] $\|T\|_\infty=\||T|\|_\infty.$
\end{enumerate}
\end{thm}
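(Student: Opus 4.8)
The plan is to construct $|T|$ by the Riesz--Kantorovich formula on the positive cone and then extend it $L_0(\Omega)$-linearly, taking property (c) essentially as the definition. For $\hat f\in L_1(\nabla,m)$ with $\hat f\geq 0$ I would put
\[
|T|\hat f=\sup\Big\{\sum_{i=1}^n|T\hat f_i|:\ \hat f_i\geq 0,\ \sum_{i=1}^n\hat f_i=\hat f\Big\},
\]
the supremum being taken over all finite decompositions of $\hat f$ into positive summands. The whole argument rests on showing that this supremum exists in $L_1(\nabla,m)$; once that is done, additivity, homogeneity, the extension to the whole space, and the norm identities follow the classical pattern for regular operators on AL-spaces.

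First I would check that the family of partition sums is upward directed: passing to a common refinement of two decompositions only increases the corresponding sum, because $|T(\hat g+\hat h)|\leq|T\hat g|+|T\hat h|$ for positive $\hat g,\hat h$. Next comes the decisive estimate: for any decomposition $\sum_i\hat f_i=\hat f$ with $\hat f_i\geq 0$, the $L_0(\Omega)$-boundedness of $T$ together with the additivity of the norm $|\cdot|_1$ on the positive cone gives
\[
\Big|\sum_{i=1}^n|T\hat f_i|\Big|_1=\sum_{i=1}^n|T\hat f_i|_1\leq\|T\|\sum_{i=1}^n|\hat f_i|_1=\|T\|\,\Big|\sum_{i=1}^n\hat f_i\Big|_1=\|T\|\,|\hat f|_1 .
\]
Thus the directed family is $|\cdot|_1$-bounded by $\|T\|\,|\hat f|_1$. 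By Theorem \ref{22}(ii) the norm $|\cdot|_1$ is order continuous, and by Theorem \ref{22}(i) the lattice $L_1(\nabla,m)$ is $(bo)$-complete; an order-continuously normed $(bo)$-complete lattice is a Kantorovich--Banach space, so every upward directed norm-bounded family $(bo)$-converges to its supremum. This produces $|T|\hat f$, and, once the two Riesz--Kantorovich expressions are identified (they coincide in any Dedekind complete vector lattice), it yields (c) and, taking $\hat g=\hat f$ in $\sup\{|T\hat g|:|\hat g|\leq\hat f\}$, also (b).

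I would then establish that $|T|$ is additive and positively $L_0(\Omega)$-homogeneous on the cone. Additivity uses the Riesz decomposition property of the vector lattice $L_1(\nabla,m)$: any positive decomposition of $\hat f_1+\hat f_2$ splits into decompositions of $\hat f_1$ and of $\hat f_2$, which turns the defining suprema into an additive assignment. Homogeneity over $L_0(\Omega)_+$ follows from the identity $|\alpha\hat g|_1=|\alpha|\,|\hat g|_1$ of Theorem \ref{22}(iii) and the $L_0(\Omega)$-module structure. By the Kantorovich extension principle an additive, positively homogeneous map of the cone into a $(bo)$-complete lattice extends uniquely to an $L_0(\Omega)$-linear operator, so I set $|T|\hat f=|T|\hat f^+-|T|\hat f^-$ on all of $L_1(\nabla,m)$; it is $L_0(\Omega)$-bounded by the estimate above. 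Uniqueness is immediate, since any operator obeying (b) and (c) must coincide with $|T|$ on the cone, hence everywhere.

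Finally I would derive the norm identities. Property (b) gives $\|T\|\leq\||T|\|$ at once, while the displayed estimate, applied to the supremum, shows $\||T|\hat f|_1\leq\|T\|\,|\hat f|_1$ for $\hat f\geq 0$, i.e. $\||T|\|\leq\|T\|$; together these prove (a). Passing to the essential supremum over $\Omega$ (equivalently, applying the lifting $\rho$) converts the $L_0(\Omega)$-valued identity (a) into the scalar identity (d). The step I expect to be the main obstacle is the existence of the defining supremum: for a merely $L_0(\Omega)$-bounded operator this is not automatic, and it is precisely the AL-type additivity of $|\cdot|_1$ on the positive cone --- feeding the uniform bound $\|T\|\,|\hat f|_1$ into the order continuity and $(bo)$-completeness of $L_1(\nabla,m)$ --- that upgrades norm boundedness to the order-theoretic supremum, the Banach--Kantorovich counterpart of the classical fact that every bounded operator on an $L_1$-space is regular.
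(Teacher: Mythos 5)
Your construction is essentially the classical Chacon--Krengel/Riesz--Kantorovich one and is very close in spirit to the paper's: the paper builds $|T|\hat f$ as the limit of the increasing net $T_\pi\hat f=\sum_i|T(\chi_{B_i}\hat f)|$ indexed by finite measurable partitions, while you take the supremum of $\sum_i|T\hat f_i|$ over all finite positive decompositions of $\hat f$; your route to (b) and (c), via the identification of the two Riesz--Kantorovich expressions, is if anything cleaner than the paper's appeal to Vulih's theory of regular operators. However, the justification you give for the step you yourself call decisive --- existence of the supremum --- is not valid as stated. The principle ``an order-continuously normed $(bo)$-complete lattice is a Kantorovich--Banach space'' is false: already classically, $c_0$ has order continuous norm and is complete, yet norm-bounded increasing sequences need not converge there. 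What actually closes the gap is exactly the additivity estimate you derived: if $S_\pi\leq S_{\pi'}$ are two of your directed partial sums, then $S_{\pi'}-S_\pi\geq 0$, so additivity of $|\cdot|_1$ on the positive cone gives $|S_{\pi'}-S_\pi|_1=|S_{\pi'}|_1-|S_\pi|_1$; the net $\{|S_\pi|_1\}$ is increasing and bounded by $\|T\|\,|\hat f|_1$ in the Dedekind complete $L_0(\Omega)$, hence $(o)$-converges, so $\{S_\pi\}$ is $(bo)$-Cauchy and $(bo)$-converges by Theorem \ref{22}; the limit is the supremum because the positive cone is $(bo)$-closed. You gesture at this mechanism in your closing sentence, but the written proof routes through a false general implication rather than through this Cauchy argument. (In fairness, the paper asserts the existence of its limit with no justification at all.)

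The second, and unambiguous, defect is your treatment of (d). You read $\|T\|_\infty$ as the essential supremum over $\Omega$ (or lifting) of the $L_0(\Omega)$-valued norm $\|T\|$, i.e.\ as a scalarized $L_1$-operator norm, and claim (d) follows from (a). That is not what (d) asserts: in the paper $\|T\|_\infty$ is the operator norm of $T$ acting on $L_\infty(\nabla,m)$, a different norm altogether, and it neither equals nor follows from any essential supremum of the $L_1$-norm $\|T\|$. Item (d) requires its own argument, which the paper supplies from (b) and (c): from (b), $|T\hat f|\leq|T||\hat f|\leq\||T|\|_\infty\|\hat f\|_\infty\mathbf{1}$ gives $\|T\|_\infty\leq\||T|\|_\infty$, while from (c), $|T||\hat f|=\sup\{|T\hat g|:|\hat g|\leq|\hat f|\}\leq\|T\|_\infty\|\hat f\|_\infty\mathbf{1}$ gives the reverse inequality. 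So, after the repair described above, your proposal establishes (a)--(c), but its proof of (d) fails and must be replaced by this two-sided estimate.
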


\begin{proof} Let $\mathcal{P}$ denote the family of all finite
measurable partitions $\pi = \{B_1, B_2,\dots, B_m\}$ of $\Omega$.
We partially order $\mathcal{P}$ in the usual way, i.e. for  $\pi
= \{B_1, B_2,\dots, B_m\}$  and $\pi' = \{B'_1, B'_2,\dots,
B'_k\}$  we write $\pi\leq \pi^{'}$ if $\pi^{'}$ is a refinement
of $\pi$, i.e. each set $B_i$ is a union of sets $\{B'_i\}$.

Given $\pi\in \mathcal{P}$, and for every $\hat{f}\in
L_1({\nabla},m), \hat{f}\geq0$ we define
$$
T_{\pi}\hat{f}\:=\sum\limits_{i=1}^m  |T(\chi_{B_i}\hat{f})|.
$$
Clearly $\pi\leq \pi'$ implies $T_{\pi}\hat{f}\leq
T_{\pi'}\hat{f}$. From $ |\hat{f}|_1=\sum\limits_{i=1}^m
|\chi_{B_i}\hat{f}|_1$ we obtain $|T_{\pi}\hat{f} |_1\leq
\|T\||\hat{f}|_1$.  Since $\{T_{\pi}\hat{f}: \pi\in \mathcal{P}\}$
is increasing on $\mathcal{P}$ and is norm bounded, therefore one
can define

$$ |T|\hat{f}:= \lim\limits_{\pi\in \mathcal{P}}T_{\pi}\hat{f},
\quad \hat{f}\geq0.$$

 We clearly have
\begin{equation}\label{11}
| |T|\hat{f}|_1\leq \|T\||\hat{f}|_1, \hat{f}\geq0
\end{equation}
and $|T|$ is linear on positive functions. Therefore  $|T|$ can be
extended by the linearity to whole $L_1(\nabla,m)$. This extension
is again denoted by $|T|$.

For $\hat{f}\geq0$ and $|\hat{g}|\leq\hat{f}$ we obtain $
|T|\hat{f}\geq|T\hat{g}|$ by means of the approximation argument
with simple functions. This yields (b).

(c).  From (b) we have  $|T|\hat{g}\geq|T\hat{g}|$, i.e. $T$ has a
positive majorant. Then by \cite[Theorem VIII 1.1]{V} $T$ is
regular. Hence, using \cite[formula (10),p.231]{V} one finds
$|T|\hat{f}=\sup\{|T\hat{g}|: \hat{g}\in L_1(\nabla,m),
|\hat{g}|\leq \hat{f}\ \}.$

(a). Again from (b) we get $\|T\|\leq \| |T| \|$ and by \eqref{11}
one finds $\| |T| \|\leq \|T\|$. Hence, $\|T\|= \| |T| \|$.

(d). Let $\hat{f}\in L^\infty(\hat{\nabla},\hat{\mu}) $. It is
then clear that from $|T\hat{f}|\leq |T||\hat{f}|$ one gets
$\|T\|_\infty\|\hat{f}\|_\infty\leq\||T|\|_\infty\|\hat{f}\|_\infty$
which means $\|T\|_\infty\leq\||T|\|_\infty.$

Using (c) we obtain
$$
|T||\hat{f}|=\sup\limits_{|\hat{g}|\leq|\hat{f}|}|T\hat{g}|\leq\sup\limits_{|\hat{g}|\leq|\hat{f}|}\|T\|_\infty\|\hat{g}\|_\infty{\mathbf{1}}\leq
\|T\|_\infty\|\hat{f}\|_\infty{\mathbf{1}}.$$ Hence,
$\||T|\|_\infty\leq\|T\|_\infty$ and
$\||T|\|_\infty=\|T\|_\infty.$
\end{proof}

\begin{defin} A linear operator $A: L_p(\nabla,m)\rightarrow
 L_p(\nabla,m)$ is called \textit{majorizable} if there exists an $L_0(\Omega)$- bounded positive linear operator $S: L_p(\nabla,m)\rightarrow
 L_p(\nabla,m)$ such that
 $$|A\hat{f}|\leq S(|\hat{f}|)$$
 for all $\hat{f}\in L_p(\nabla,m).$ The operator $S$ is called
 \textit{majorant}.
\end{defin}

\begin{thm}\label{32} Let $T: L_p(\nabla,m)\rightarrow
 L_p(\nabla,m)$ be a majorizable operator with a majorant $S$ on Banach-Kantorovich lattice
 $L_p(\nabla,m)$. Then there exists a unique   $|T|$- $L_0(\Omega)$- bounded linear operator on $L_p(\nabla,m)$ such that
\begin{enumerate}
\item[(a)] $\||T|\|\leq\| S \|;$

\item[(b)] one has $|T\hat{f}|\leq |T||\hat{f}|$,  for all
$\hat{f}\in L_p(\nabla,m)$;

\item[(c)] for each $\hat{f}\in L_p(\nabla,m), \hat{f}\geq0$ one
has $|T|\hat{f}=\sup\{|T\hat{g}|: \hat{g}\in L_p(\nabla,m),
|\hat{g}|\leq \hat{f} \};$
\end{enumerate}
\end{thm}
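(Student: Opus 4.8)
The plan is to mimic the construction used in Theorem~\ref{31}, but with the crucial difference that the $L_1$-norm estimate is now replaced by a control coming from the majorant $S$. Just as before, I would fix $\hat{f}\in L_p(\nabla,m)$ with $\hat{f}\geq 0$, let $\mathcal{P}$ be the directed family of finite measurable partitions $\pi=\{B_1,\dots,B_m\}$ of $\Omega$, and set
$$
T_\pi\hat{f}:=\sum_{i=1}^m |T(\chi_{B_i}\hat{f})|.
$$
The monotonicity $\pi\leq\pi'\Rightarrow T_\pi\hat{f}\leq T_{\pi'}\hat{f}$ follows exactly as in Theorem~\ref{31}, from the triangle inequality for the modulus in the Banach--Kantorovich lattice $L_p(\nabla,m)$. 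The new point is the \emph{uniform majorization}: since $|T(\chi_{B_i}\hat{f})|\leq S(\chi_{B_i}\hat{f})$ by the definition of majorizability, additivity of $S$ on the disjoint pieces gives $T_\pi\hat{f}\leq\sum_{i=1}^m S(\chi_{B_i}\hat{f})=S(\hat{f})$ for \emph{every} $\pi$. Thus the increasing net $\{T_\pi\hat{f}\}$ is bounded above by the fixed element $S(\hat{f})\in L_p(\nabla,m)$.

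**Next I would invoke $(bo)$-completeness** together with the order-boundedness just established. By Theorem~\ref{22}(i), $L_p(\nabla,m)$ is an order-complete (i.e.\ $(bo)$-complete) lattice, so the increasing order-bounded net $\{T_\pi\hat{f}\}_{\pi\in\mathcal{P}}$ has a supremum, which I define to be
$$
|T|\hat{f}:=\sup_{\pi\in\mathcal{P}} T_\pi\hat{f}=(o)\text{-}\lim_{\pi\in\mathcal{P}} T_\pi\hat{f},\qquad \hat{f}\geq 0.
$$
Passing to the supremum in $T_\pi\hat{f}\leq S(\hat{f})$ immediately yields $|T|\hat{f}\leq S(\hat{f})$ for all $\hat{f}\geq 0$. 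The map $|T|$ is additive and positively homogeneous on the positive cone (additivity on positives uses that a common refinement of two partitions computes both sums simultaneously, exactly as in the $L_1$ case), so it extends by linearity to a well-defined linear operator on all of $L_p(\nabla,m)$; I would keep the same symbol $|T|$ for the extension.

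**The three claimed properties then follow.** For (a), the estimate $|T|\hat{f}\leq S\hat{f}$ on positives, combined with monotonicity of the $L_0(\Omega)$-valued norm (Theorem~\ref{22}(i) and monotonicity of $|\cdot|_p$), gives $\big|\,|T|\hat{f}\,\big|_p\leq |S\hat{f}|_p\leq\|S\|\,|\hat{f}|_p$, whence $\||T|\|\leq\|S\|$ after taking the supremum over $|\hat{f}|_p\leq\mathbf{1}$. For (b), given $\hat{g}$ with $|\hat{g}|\leq\hat{f}$, an approximation-by-simple-functions argument identical to the one used for property (b) of Theorem~\ref{31} shows $|T|\hat{f}\geq |T\hat{g}|$; taking $\hat{g}=\hat{f}$ (so $|\hat{g}|=\hat{f}$) gives $|T\hat{f}|\leq|T|\,|\hat{f}|$ for positive $\hat{f}$, and the general case follows by splitting into positive and negative parts. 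For (c), property (b) shows $|T|$ is a positive majorant of $T$ in the sense of Vulikh's theory, so by \cite[Theorem VIII.1.1]{V} the operator $T$ is regular and the modulus formula \cite[formula (10),p.231]{V} gives $|T|\hat{f}=\sup\{|T\hat{g}|:\hat{g}\in L_p(\nabla,m),\ |\hat{g}|\leq\hat{f}\}$.

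**The main obstacle** I anticipate is establishing that the limit operator $|T|$ is genuinely $L_0(\Omega)$-bounded and $L_0(\Omega)$-linear (module-linear), rather than merely $\mathbb{R}$-linear. One must verify that the supremum interacts correctly with the $L_0(\Omega)$-module structure $\|f\hat{u}\|=|f|\,\|\hat{u}\|$ guaranteed for a Banach--Kantorovich space, so that multiplication by elements $h\in L_0(\Omega)$ commutes with the partition sums in the limit; this is where the $d$-decomposability of $m$ (Theorem~\ref{22}(iii)) enters and must be used with care. A second, more technical point is the justification of the order-limit interchanges in the approximation step for (b): one needs that $\chi_{B_i}\hat{f}\to\hat{f}$ and the continuity of $T$ and of $|\cdot|$ with respect to $(bo)$-convergence are compatible, which relies on the $(bo)$-closedness of the positive cone and on the monotone convergence properties recorded in Theorem~\ref{22}(ii).
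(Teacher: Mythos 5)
Your proposal is correct and follows essentially the same route as the paper: both mimic the partition construction $T_\pi\hat{f}=\sum_i|T(\chi_{B_i}\hat{f})|$ of Theorem~\ref{31}, obtain the bound $|T|\hat{f}\leq S\hat{f}$, deduce (a) from $\big||T|\hat{f}\big|_p\leq|S\hat{f}|_p\leq\|S\|\,|\hat{f}|_p$, and get (b), (c) via the simple-function approximation and Vulikh's modulus formula. The only (welcome) difference is that you make explicit the point the paper glosses over with ``similar to Theorem~\ref{31}'': in $L_p$ the $L_1$-norm additivity argument is unavailable, and it is precisely the uniform order bound $T_\pi\hat{f}\leq S\hat{f}$ coming from the majorant that guarantees the increasing net converges, whereas the paper instead derives $|T|\hat{f}\leq S\hat{f}$ after the fact from property (c).
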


\begin{proof} The proof of the existence of $|T|$ and (b), (c) are similar to
the proof of Theorem \ref{31}.  Now we prove (a). From

$$|T|\hat{f}=\sup\{|T\hat{g}|: \hat{g}\in L_p(\nabla,m),
|\hat{g}|\leq \hat{f}\}\leq \sup\{S|\hat{g}|: \hat{g}\in
L_p(\nabla,m), |\hat{g}|\leq \hat{f}\}=S\hat{f}$$ we get
$$||T|\hat{f}|_p\leq |S\hat{f}|_p\leq\|S\|\|\hat{f}|_p$$
hence
$$\||T|\|\leq\|S\|.$$ This completes the proof.
\end{proof}

\begin{thm}\label{33} If $A: L_p(\nabla,m)\rightarrow
 L_p(\nabla,m)$ is a majorizable operator, and its majorant $S$ is a contraction with $S\mathbf{1}\leq\mathbf{1}$, then for every $\omega\in\Omega$
 there exists a majorizable operator $A_\omega: L_p(\nabla_\omega,m_\omega)\rightarrow
 L_p(\nabla_\omega,m_\omega)$ such that
 $$A_\omega f(\omega)=(A\hat{f})(\omega) \ \ \textrm{a.e.}$$
 for all $\hat{f}\in L_p(\nabla,m).$
\end{thm}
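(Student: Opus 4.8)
The plan is to reduce the construction of the fibrewise operators $A_\omega$ to the contraction case already settled in Theorem \ref{21}, applied to a suitable majorant, and then to transport $A$ itself to the fibres by means of the vector-valued lifting $\ell$. The heart of the matter is a single fibrewise contraction estimate coming from the majorant, which simultaneously gives well-definedness, continuity and majorizability of each $A_\omega$.

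First I would produce a good global majorant. Since $A$ is majorizable with majorant $S$, where $S$ is a contraction with $S\mathbf{1}\le\mathbf{1}$, Theorem \ref{32} yields the least majorant $|A|$, which satisfies $\||A|\|\le\|S\|\le\mathbf{1}$ and, by property (c) of that theorem,
$$|A|\mathbf{1}=\sup\{|A\hat{g}|:|\hat{g}|\le\mathbf{1}\}\le\sup\{S|\hat{g}|:|\hat{g}|\le\mathbf{1}\}\le S\mathbf{1}\le\mathbf{1}.$$
Thus $|A|$ is a positive linear contraction with $|A|\mathbf{1}\le\mathbf{1}$, so Theorem \ref{21} applies to it: for a.e. $\omega$ there is a positive contraction $|A|_\omega$ on $L_p(\nabla_\omega,m_\omega)$ with $|A|_\omega f(\omega)=(|A|\hat{f})(\omega)$ a.e.

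Next I would define $A_\omega$ on the dense subset $D_\omega=\{\ell(\hat{u})(\omega):\hat{u}\in L_\infty(\nabla,m)\}$ of the fibre $X(\omega)=L_p(\nabla_\omega,m_\omega)$ (dense by property (6) of $\ell$) by the rule $A_\omega(\ell(\hat{u})(\omega)):=(A\hat{u})(\omega)$, where $(A\hat{u})(\omega)$ is the value at $\omega$ of the section $A\hat{u}$ under the modulo ordered isomorphism $L_p(\nabla,m)\cong L_0(\Omega,X)$. The crucial estimate is that, by linearity and Theorem \ref{32}(b), $|A\hat{u}-A\hat{v}|=|A(\hat{u}-\hat{v})|\le|A|\,|\hat{u}-\hat{v}|$; passing to fibres, using the previous step together with $\||A|\|\le\mathbf{1}$ and properties (1)--(3) of the lifting, one gets a.e.
$$\|(A\hat{u})(\omega)-(A\hat{v})(\omega)\|\le\||A|_\omega(|\hat{u}-\hat{v}|(\omega))\|\le\||\hat{u}-\hat{v}|(\omega)\|=\|\ell(\hat{u})(\omega)-\ell(\hat{v})(\omega)\|.$$
This shows at once that $A_\omega$ is well defined (if $\ell(\hat{u})(\omega)=\ell(\hat{v})(\omega)$ the two values coincide) and is a $1$-Lipschitz additive map on $D_\omega$, hence extends uniquely and continuously to all of $X(\omega)$, with linearity inherited from $A$ by continuity. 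Taking $\hat{v}=0$ in the same computation gives $|A_\omega\xi|\le|A|_\omega|\xi|$ first on $D_\omega$ and then on $X(\omega)$, so $A_\omega$ is majorizable with majorant $|A|_\omega$. Finally the identity $A_\omega f(\omega)=(A\hat{f})(\omega)$ holds by construction for $\hat{f}\in L_\infty(\nabla,m)$ and extends to all $\hat{f}\in L_p(\nabla,m)$ by the $(bo)$-density of $L_\infty(\nabla,m)$ in $L_p(\nabla,m)$ from Theorem \ref{22}(v), the $L_0(\Omega)$-boundedness of $A$, and the continuity of $A_\omega$.

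The main obstacle I anticipate is not any individual estimate but the passage from the family of ``for each pair, a.e. $\omega$'' inequalities to a single exceptional null set valid simultaneously, so that $\omega\mapsto A_\omega$ is a genuinely a.e.-defined measurable field of operators rather than a pointwise-in-$\omega$ artefact. I would handle this by fixing a countable $|\cdot|_p$-dense family of sections in $L_\infty(\nabla,m)$ (for instance finite rational combinations drawn from the generating set $L$), verifying well-definedness, additivity, linearity and the majorization on the corresponding countable dense subsets of the fibres off one null set, and only afterwards extending by fibrewise continuity; the measurability of the resulting field then follows from that of the chosen sections together with the lifting properties.
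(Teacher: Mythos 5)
Your overall strategy---define the fibre operator on the dense set $\{\ell(\hat{u})(\omega)\}$, use a majorant to obtain a contraction estimate, extend by continuity, and pass the majorization to the fibres---is the same as the paper's, and your preliminary step is sound and even slightly tidier than the paper's handling of the majorant: applying Theorem \ref{32} to get $|A|$ with $\||A|\|\le\|S\|\le\mathbf{1}$ and $|A|\mathbf{1}\le\mathbf{1}$, and then Theorem \ref{21} to $|A|$, is a clean way to produce the fibrewise majorant $|A|_\omega$. The genuine gap is exactly at the point you flag yourself. You define $A_\omega(\ell(\hat{u})(\omega)):=(A\hat{u})(\omega)$, where $(A\hat{u})(\omega)$ is the value of a representative of an equivalence class of sections, hence determined only up to a null set depending on $\hat{u}$; to get a single operator $A_\omega$ you must aggregate uncountably many exceptional null sets, and your proposed remedy---a countable $|\cdot|_p$-dense family in $L_\infty(\nabla,m)$---does not exist in general. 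Here $\nabla$ is an \emph{arbitrary} complete Boolean algebra and $(\Omega,\Sigma,\mu)$ an arbitrary complete finite measure space, so neither $L_p(\nabla,m)$ nor the fibres $L_p(\nabla_\omega,m_\omega)$ need be separable, and the generating set $L$ is itself uncountable (its elements involve arbitrary sections $e_i(\omega)\in\nabla_\omega$, not merely rational coefficients). So the aggregation step, as proposed, fails; moreover even if it worked it would produce $A_\omega$ only for almost every $\omega$, whereas the theorem asserts existence for every $\omega$.

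The paper's proof avoids this problem by never using a.e.-defined section values in the construction. It first observes that, since $S$ is a contraction with $S\mathbf{1}\le\mathbf{1}$ and $|A\hat{f}|\le S|\hat{f}|$, one has $A(L_\infty(\nabla,m))\subset L_\infty(\nabla,m)$; consequently the vector-valued lifting $\ell$ can be applied on the \emph{output} side as well, and the fibre operator is defined by $\varphi_\omega(\ell(\hat{f})(\omega)):=\ell(A\hat{f})(\omega)$. Because the lifting properties hold at every point---in particular property (2), $\|\ell(\hat{u})(\omega)\|=\rho(|\hat{u}|_p)(\omega)$, where the lifting $\rho$ turns the order inequality $|A\hat{f}|_p\le|\hat{f}|_p$ into a pointwise inequality valid for \emph{every} $\omega$---well-definedness, boundedness and the majorization estimate require no exceptional null set at all. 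Null sets enter only in the final identification $A_\omega f(\omega)=(A\hat{f})(\omega)$, which is carried out for each fixed $\hat{f}$ by approximation with a sequence from $L_\infty(\nabla,m)$ (the argument of \cite[Theorem 2.1]{GM}), where only countably many null sets arise. Your argument can be repaired the same way: first prove $A(L_\infty(\nabla,m))\subset L_\infty(\nabla,m)$ (one line from $|A\hat{f}|\le S|\hat{f}|\le\|\hat{f}\|_\infty S\mathbf{1}\le\|\hat{f}\|_\infty\mathbf{1}$), then replace $(A\hat{u})(\omega)$ by $\ell(A\hat{u})(\omega)$ in your definition; your remaining estimates go through verbatim and your final paragraph becomes unnecessary.
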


\begin{proof} Since  $S$ is
a contraction and $S\mathbf{1}\leq\mathbf{1}$, we obtain that
$A(L_\infty(\nabla,m))\subset L_\infty(\nabla,m)$.

 Now we define a linear operator
 $\varphi_\omega$ from  $\{\ell(\hat{f})(\omega) : \hat{f}\in
 L_\infty(\nabla,m)\}$ into
 $L_p(\nabla_\omega,m_\omega)$ by
 $$\varphi_\omega
 (\ell(\hat{f})(\omega))=\ell(A\hat{f})(\omega)$$ where $\ell$ is
 the vector lifting of $L_\infty(\nabla,m)$
 associated with the lifting  $\rho$.

 From the majorizability of $A$ one gets
 $$|\varphi(\omega)
 (\ell(\hat{f})(\omega))|=|\ell(A\hat{f})(\omega)|=\ell(|A\hat{f}|)(\omega)\leq\ell(S|\hat{f}|)(\omega)
 =S'_\omega(\ell(|\hat{f}|)(\omega))=S'_\omega(|\ell(|\hat{f}|)(\omega)|)$$
 for any positive $\hat{f}\in L_\infty(\nabla,m)$, where
 $S'_\omega$ is a positive contraction on $\{\ell(\hat{f})(\omega) : \hat{f}\in
 L_\infty(\nabla,m)\}$. This means that $\varphi(\omega)$ is
 a majorizable operator on $\{\ell(\hat{f})(\omega) : \hat{f}\in
 L_\infty(\nabla,m)\}$.

 From $|S\hat{f}|_p\leq |\hat{f}|_p$ we obtain
 $$\|\ell(A\hat{f})(\omega)\|_{{L_p(\nabla_\omega,m_\omega)}}=
\rho(|A\hat{f}|_p)(\omega)\leq \rho(|S\hat{f}|_p)(\omega)\leq
\rho(|\hat{f}|_p)(\omega)=
 \|\ell(\hat{f})(\omega)\|_{{L_p(\nabla_\omega,m_\omega)}}$$ which
 implies  that $\varphi_\omega$ and $S'_\omega$ are well defined and
 bounded. Moreover, $S'_\omega$ is positive (see Theorem \ref{21}).
Due to the density of $\{\ell(\hat{f})(\omega) : \hat{f}\in
 L_\infty(\nabla,m)\}$ in
 $L_p(\nabla_\omega,m_\omega)$, we can extend $\varphi_\omega$ and
 $S'_\omega$, respectively,  to $L_p(\nabla_\omega,m_\omega)$. We respectively denote the extensions
by $A_\omega$ and $S_\omega$. One can see that $A_\omega$ is
bounded, and $S_\omega$ is positive bounded.

From $$|\varphi(\omega)
 (\ell(\hat{f})(\omega))|\leq S'_\omega(|\ell(\hat{f})(\omega)|)$$
 for any $\hat{f}\in
 L_\infty(\nabla,m)$ one finds
$$|A_\omega
 (f(\omega))|\leq S_\omega(|f(\omega)|)$$ i.e. $A_\omega$ is
 majorizable.

Repeating the argument of the proof of \cite[Theorem 2.1]{GM}, we
can prove that
$$A_\omega f(\omega)=(A\hat{f})(\omega)$$
 for almost all $\omega\in\Omega$ and for all $\hat{f}\in
 L_p(\nabla,m)$. This completes the proof.
\end{proof}

\begin{thm}\label{34} If $A: L_p(\nabla,m)\rightarrow
 L_p(\nabla,m)$ is a majorizable operator, and its majorant $S$ is a contraction with $S\mathbf{1}\leq\mathbf{1}$, then
 $$\||A|_\omega \|_{p,\omega}=\||A_\omega|\|_{p,\omega}$$
 for almost all $\omega\in\Omega$, where $\|\cdot\|_{p,\omega}$ is the norm of an operator from
$L_p(\nabla_\omega,m_\omega)$ to $L_p(\nabla_\omega,m_\omega)$.
\end{thm}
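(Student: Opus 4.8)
The plan is to prove the stronger fiberwise identity $|A|_\omega=|A_\omega|$ for almost every $\omega$, from which the asserted equality of operator norms is immediate. First I would secure the existence of both objects. The operator $|A|$ is furnished by Theorem \ref{32}, and since $|A|\hat f\le S\hat f$ for $\hat f\ge 0$ (this is exactly the inequality obtained inside the proof of Theorem \ref{32}), one has $\big||A|\hat f\big|\le|A||\hat f|\le S|\hat f|$ for every $\hat f$; thus $|A|$ is itself majorizable with the same contractive majorant $S$, and Theorem \ref{33} applies to $|A|$ and produces the fibers $|A|_\omega$ with $|A|_\omega f(\omega)=(|A|\hat f)(\omega)$ a.e. On the other side, Theorem \ref{33} gives the fibers $A_\omega$ together with a positive contractive majorant $S_\omega$ satisfying $|A_\omega f(\omega)|\le S_\omega|f(\omega)|$; hence each $A_\omega$ is majorizable on the Dedekind complete classical lattice $L_p(\nabla_\omega,m_\omega)$, so its modulus $|A_\omega|$ exists, is the \emph{least} positive majorant of $A_\omega$, and satisfies $|A_\omega|\le S_\omega$.

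The inequality $|A_\omega|\le|A|_\omega$ is the easy half, and it is purely fiberwise. Evaluating the pointwise estimate $|A\hat f|\le|A||\hat f|$ of Theorem \ref{32}(b) at $\omega$ and using $|A\hat f|(\omega)=|A_\omega f(\omega)|$ together with $(|A||\hat f|)(\omega)=|A|_\omega|f(\omega)|$ (both from Theorem \ref{33}), I get $|A_\omega f(\omega)|\le|A|_\omega|f(\omega)|$ for every $\hat f$; that is, $|A|_\omega$ is a positive majorant of $A_\omega$ on $L_p(\nabla_\omega,m_\omega)$. Since $|A_\omega|$ is the smallest positive majorant of $A_\omega$, this yields $|A_\omega|\le|A|_\omega$ for almost all $\omega$, with no measurable selection required.

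For the opposite inequality $|A|_\omega\le|A_\omega|$ I would assemble the fiberwise moduli into a single global operator. The field $\omega\mapsto|A_\omega|$ is uniformly bounded (by $S_\omega$, hence by $\mathbf 1$) and, by the bundle construction of the modulus read fiberwise (that is, $|A_\omega|g=(o)\text{-}\lim_{\pi}\sum_i|A_\omega(\chi_{C_i}g)|$ along partitions $\{C_i\}$ of $\mathbf 1$ in $\nabla_\omega$, as in Theorem \ref{31}), is a measurable field of operators; therefore it determines a positive $L_0(\Omega)$-linear bounded operator $\widetilde{|A|}$ on $L_p(\nabla,m)$ with $(\widetilde{|A|}\hat f)(\omega)=|A_\omega| f(\omega)$ a.e. From the fiber modulus inequality $|A_\omega h|\le|A_\omega|\,|h|$ one gets $|A\hat f|(\omega)=|A_\omega f(\omega)|\le|A_\omega|\,|f(\omega)|=(\widetilde{|A|}\,|\hat f|)(\omega)$, so $\widetilde{|A|}$ is a positive majorant of $A$. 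Because the global modulus $|A|$ is the least positive majorant of $A$ (the characterization in Theorem \ref{32}(c) gives $|A|\hat f=\sup\{|A\hat g|:|\hat g|\le\hat f\}\le\widetilde{|A|}\hat f$ for $\hat f\ge0$), I conclude $|A|\le\widetilde{|A|}$, i.e. $|A|_\omega\le|A_\omega|$ for almost all $\omega$. Combining the two halves gives $|A|_\omega=|A_\omega|$ a.e., and taking operator norms in $L_p(\nabla_\omega,m_\omega)$ yields $\||A|_\omega\|_{p,\omega}=\||A_\omega|\|_{p,\omega}$ a.e.

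The delicate point, and the step I expect to be the main obstacle, is the assembly in the third paragraph: one must check that $\omega\mapsto|A_\omega|$ is a genuinely measurable field, so that $\widetilde{|A|}$ is a bona fide $L_0(\Omega)$-linear operator to which the global least-majorant property applies. This measurability should be handled as in \cite{Ga3,G1,GM}, by expressing $|A_\omega|$ through the $(o)$-limit of the partition sums $\sum_i|A_\omega(\chi_{C_i}\cdot)|$ and using that such $(o)$-limits of increasing nets pass to the fibers almost everywhere after reduction to a countable cofinal family of partitions. Once this is in place, both inequalities reduce to the characterization of the modulus as the least positive majorant, applied once in the fibers and once globally.
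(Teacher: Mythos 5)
Your existence discussion and the easy inequality $|A_\omega|\le|A|_\omega$ are sound and coincide with the paper's first step (the paper deduces it from $-|A|\le A\le|A|$, which is the same least-majorant argument in the fibre). The gap is in your third paragraph, and it is not a peripheral technicality that can be outsourced to the references: the construction of $\widetilde{|A|}$ requires showing that $\omega\mapsto|A_\omega|f(\omega)$ is a measurable section for every $\hat f\in L_p(\nabla,m)$, and the route you sketch for this is circular. The only partition sums that are manifestly measurable in $\omega$ are those induced by \emph{global} partitions of unity $\pi=\{e_i\}$, since for those $\sum_i|A_\omega(e_i(\omega)f(\omega))|=(A_\pi\hat f)(\omega)$ is the fibrewise value of a globally defined element. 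But by the very construction of the modulus in Theorems \ref{31}--\ref{32} and by \eqref{22}, the increasing limit of these induced sums is a.e. equal to $(|A|\hat f)(\omega)=|A|_\omega f(\omega)$, not --- until the theorem is proved --- to $|A_\omega|f(\omega)$. The fibre modulus is a supremum over \emph{all} finite partitions of unity of $\nabla_\omega$, an uncountable family with no evident countable cofinal subfamily consisting of induced partitions; the assertion that induced partitions suffice to compute $|A_\omega|$ is precisely the identity $|A|_\omega=|A_\omega|$ you are trying to prove. So the ``delicate point'' you flag is the theorem itself in disguise, and your argument as written presupposes its conclusion.

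The paper arranges the proof so that this measurability question never arises: instead of lifting the fibre moduli up to a global operator, it pushes the global approximants down. One chooses an increasing sequence $\pi_n$ with $A_{\pi_n}\hat f\uparrow|A|\hat f$ in the $(bo)$-sense; identity \eqref{22} identifies $(A_{\pi_n}\hat f)(\omega)$ with the fibre partition sum $A_{\omega,\pi_n}f(\omega)$; each such sum is dominated by $|A_\omega|f(\omega)$, simply because every fibre partition sum is dominated by the fibre modulus; and letting $n\to\infty$ gives $\big\||A|_\omega f(\omega)\big\|_{L_p(\nabla_\omega,m_\omega)}\le\big\||A_\omega|f(\omega)\big\|_{L_p(\nabla_\omega,m_\omega)}$ a.e., hence $\big\||A|_\omega\big\|_{p,\omega}\le\big\||A_\omega|\big\|_{p,\omega}$ a.e. Here measurability comes for free, because every object in sight is the fibrewise image of a global one, and note that only the equality of norms is claimed and needed --- not your stronger operator identity $|A|_\omega=|A_\omega|$. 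Your proposal can be repaired by exactly this inversion: discard $\widetilde{|A|}$, keep your correct observation that $|A_\omega|$ dominates every fibre partition sum, apply it to the induced sums $A_{\omega,\pi_n}f(\omega)$ whose limit is $|A|_\omega f(\omega)$, and combine with your easy half to obtain the asserted norm equality.
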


\begin{proof} Due to  $-|A|\leq
A\leq|A|$ we have $-|A|_\omega\leq A_\omega\leq|A|_\omega$ which
yields $|A_\omega|\leq |A|_\omega$ for almost all
$\omega\in\Omega$. Hence, $\||A|_\omega
\|_{p,\omega}\geq\||A_\omega|\|_{p,\omega}$
 for almost
all $\omega\in\Omega$.

 Let $\{\pi_n\}$ be an increasing sequence  in $\mathcal{P}$ such that
$|A|\hat{f}=(bo)-\lim\limits_{n\rightarrow\infty}A_{\pi_n}\hat{f}$,
for $ 0\leq\hat{f}\in L_p(\nabla,m)$.

One can see that
\begin{equation}\label{22}
(A_{\pi_n}\hat{f})(\omega)=\sum\limits_{i=1}^{m}|A(\chi_{B_i}\hat{f})|(\omega)=
\sum\limits_{i=1}^{m}|A_\omega(\chi_{B_i}(\omega){f})(\omega)|=A_{\omega,\pi_n}f(\omega)
\end{equation} for almost all $\omega\in\Omega$.

Now using
$$|A|\hat{f}=(bo)-\lim\limits_{n\rightarrow\infty}A_{\pi_n}\hat{f} \ \ \textrm{in} \ \ L_p({\nabla},m),$$
with \eqref{22} we obtain
$|A_{\pi_n}\hat{f}|_p\stackrel{(o)}\rightarrow
\big||A|\hat{f}\big|_p$ or
$|A_{\pi_n}\hat{f}|_p(\omega)\rightarrow
\big||A|\hat{f}\big|_p(\omega)$ for almost all $\omega\in\Omega$.
Hence,
$$
\|A_{\pi_n,\omega}{f}(\omega)\|_{L_p(\nabla_\omega,m_\omega)}\rightarrow
\big\||A|_\omega{f}(\omega)\big\|_{L_p(\nabla_\omega,m_\omega)}$$
for almost all $\omega\in\Omega$.

On the other hand, one has
$$\lim\limits_{n\rightarrow\infty}\|A_{\pi_n,\omega}{f}(\omega)\|_{L_p(\nabla_\omega,m_\omega)}\leq
\big\||A_\omega|{f}(\omega)\big\|_{L_p(\nabla_\omega,m_\omega)}$$
for almost all $\omega\in\Omega$. This means that
$$\big\||A|_\omega{f}(\omega)\big\|_{L_p(\nabla_\omega,m_\omega)}\leq\big\||A_\omega|{f}(\omega)\big\|_{L_p(\nabla_\omega,m_\omega)}$$
or
$$\big\||A|_\omega\big\|_{p,\omega}\leq\big\||A_\omega|\big\|_{p,\omega}$$
for almost all $\omega\in\Omega$. Hence
$$\big\||A|_\omega\big\|_{p,\omega}=\big\||A_\omega|\big\|_{p,\omega}$$
for almost all $\omega\in\Omega$. This completes the proof.
\end{proof}

\section{The strong "zero-two" law}

In this section we are going to prove an analog of the strong
"zero-two" law for positive contractions in the Banach-Kantorovich
$L_p$-lattices. Before the formulation of the main result, we need
some auxiliary results.

\begin{prop}\label{41} Let $T, S : L_p({\nabla},m)\to
L_p({\nabla},m)$  be two positive linear contractions such that
$T\mathbf{1}\leq\mathbf{1}$, $S\mathbf{1}\leq\mathbf{1}$. Then
$$
\big\||T_\omega-S_\omega|\big\|_{p,\omega}\geq
\big\||T-S|\big\|(\omega), \ \ \textrm{a.e.}
$$
here $|\cdot|$ means the modulus of an operator.
\end{prop}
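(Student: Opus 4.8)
The plan is to reduce the statement to Theorem~\ref{34} by a rescaling, and then to compare the $L_0(\Omega)$-valued operator norm of $|T-S|$ with the fibrewise operator norms coming from the bundle representation. First I would set $A=\tfrac12(T-S)$ and $R=\tfrac12(T+S)$. Since $T,S$ are positive contractions with $T\mathbf 1\le\mathbf 1$ and $S\mathbf 1\le\mathbf 1$, the operator $R$ is positive, satisfies $R\mathbf 1\le\mathbf 1$ and $\|R\|\le\tfrac12(\|T\|+\|S\|)\le\mathbf 1$; moreover $|A\hat f|=\tfrac12|(T-S)\hat f|\le\tfrac12(T|\hat f|+S|\hat f|)=R|\hat f|$, so $A$ is majorizable with majorant $R$, a positive contraction fixing $\mathbf 1$ from below. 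Hence Theorems~\ref{32}, \ref{33} and \ref{34} all apply to $A$. By real-scalar homogeneity of the modulus and of the fibre map one has $|A|=\tfrac12|T-S|$ and, from $A_\omega f(\omega)=(A\hat f)(\omega)=\tfrac12\big((T\hat f)(\omega)-(S\hat f)(\omega)\big)$ together with the uniqueness of the fibre operator, $A_\omega=\tfrac12(T_\omega-S_\omega)$; consequently $|A_\omega|=\tfrac12|T_\omega-S_\omega|$.

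The core of the argument is the pointwise estimate
\[
\big\||A|\big\|(\omega)\ \le\ \big\|(|A|)_\omega\big\|_{p,\omega}\qquad\textrm{a.e.}
\]
To obtain it I first note that $|A|$ is a positive contraction with $|A|\mathbf 1=\sup_{|\hat g|\le\mathbf 1}|A\hat g|\le R\mathbf 1\le\mathbf 1$ by Theorem~\ref{32}(c), so that Theorem~\ref{21} supplies its fibre operator $(|A|)_\omega$ with $(|A|)_\omega f(\omega)=(|A|\hat f)(\omega)$ a.e. Fixing $\hat f\in L_p(\nabla,m)$ with $|\hat f|_p\le\mathbf 1$ and using the bundle identity $|\hat f|_p(\omega)=\|f(\omega)\|_{L_p(\nabla_\omega,m_\omega)}$ a.e., I would estimate
\[
\big||A|\hat f\big|_p(\omega)=\big\|(|A|)_\omega f(\omega)\big\|_{p,\omega}\le\big\|(|A|)_\omega\big\|_{p,\omega}\,\|f(\omega)\|_{L_p(\nabla_\omega,m_\omega)}\le\big\|(|A|)_\omega\big\|_{p,\omega}
\]
for almost every $\omega$.

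The principal difficulty is to pass from this estimate, valid a.e.\ for each fixed $\hat f$, to the same bound for the order-supremum $\||A|\|=\sup_{|\hat f|_p\le\mathbf 1}||A|\hat f|_p$ that defines the $L_0(\Omega)$-valued norm: a single almost-everywhere inequality survives only countable suprema, whereas the supremum here runs over the whole uncountable unit ball. I would resolve this by invoking Dedekind completeness together with the countable sup property of $L_0(\Omega)$ to extract a countable family $\{\hat f_n\}$ in the unit ball with $\||A|\|=\sup_n||A|\hat f_n|_p$; the countably many a.e.\ inequalities $||A|\hat f_n|_p(\omega)\le\|(|A|)_\omega\|_{p,\omega}$ then combine to give $\||A|\|(\omega)\le\|(|A|)_\omega\|_{p,\omega}$ a.e. This countable-selection step, which also relies on measurability of $\omega\mapsto\|(|A|)_\omega\|_{p,\omega}$ furnished by the lifting, is the point where the bundle structure genuinely does the work.

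Finally I would assemble the pieces. By Theorem~\ref{34}, $\|(|A|)_\omega\|_{p,\omega}=\||A_\omega|\|_{p,\omega}$ a.e. Combining this with the displayed estimate and the scalings $|A|=\tfrac12|T-S|$ and $|A_\omega|=\tfrac12|T_\omega-S_\omega|$ yields
\[
\tfrac12\big\||T-S|\big\|(\omega)=\big\||A|\big\|(\omega)\le\big\|(|A|)_\omega\big\|_{p,\omega}=\big\||A_\omega|\big\|_{p,\omega}=\tfrac12\big\||T_\omega-S_\omega|\big\|_{p,\omega}
\]
for almost every $\omega$. Cancelling the factor $\tfrac12$ gives $\||T_\omega-S_\omega|\|_{p,\omega}\ge\||T-S|\|(\omega)$ a.e., which is the assertion.
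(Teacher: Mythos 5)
Your proof is correct, but it reaches the conclusion by a genuinely different route than the paper, in two respects. First, the paper majorizes $T-S$ directly by $T$ (asserting $|(T-S)\hat f|\le T|\hat f|$ from $(T-S)\hat f\le T\hat f$ for $\hat f\ge 0$) and then applies Theorem~\ref{34} to $T-S$ itself; your rescaling $A=\tfrac12(T-S)$ with majorant $R=\tfrac12(T+S)$ is more careful, since for $\hat f\ge 0$ only the positive part of $(T-S)\hat f$ is dominated by $T\hat f$ (the negative part is dominated by $S\hat f$), so the honest majorant is $T+S$, which need not be a contraction --- your factor $\tfrac12$ is exactly what makes Theorems~\ref{33} and~\ref{34} applicable, and it quietly repairs this weak point in the paper's own argument. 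Second, and this is the main methodological difference: to pass from an a.e.\ inequality valid for each fixed $\hat f$ to the $L_0(\Omega)$-valued norm, the paper invokes \cite[Proposition 2]{GK} (a Banach--Steinhaus-type uniform boundedness result for Banach--Kantorovich spaces), which produces for every $\varepsilon>0$ a \emph{single} $\hat f$ with $|\hat f|_p=\mathbf 1$ and $\big\||T-S|\big\|-\varepsilon\mathbf 1\le \big||T-S|\hat f\big|_p$; evaluating this at $\omega$ and letting $\varepsilon\downarrow 0$ gives the statement. You instead use the countable sup property (order separability) of $L_0(\Omega)$ over the finite measure space to replace the uncountable unit-ball supremum by a countable family $\{\hat f_n\}$ and then combine countably many a.e.\ inequalities. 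Both mechanisms are sound: the paper's buys a clean one-element $\varepsilon$-approximation at the price of citing an external theorem, while yours is more elementary and self-contained, at the price of justifying order separability of $L_0(\Omega)$ (standard for finite $\mu$, via directing the family by finite suprema) and of slightly longer bookkeeping; note also that your identification $A_\omega=\tfrac12(T_\omega-S_\omega)$ a.e.\ rests on the essential uniqueness of fibre operators, an identification the paper makes implicitly as well.
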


\begin{proof} Due to $(T-S)(\hat{f})\leq
T(\hat{f})$ for any positive $\hat{f}\in L_p(\nabla,m)$ one gets
$$|(T-S)(\hat{f})|\leq T(|\hat{f}|)$$
for any  $\hat{f}\in L_p(\nabla,m)$. Hence $T-S$  is majorizable.
Since $T$ is a contraction and $T\mathbf{1}\leq\mathbf{1}$ by
Theorem \ref{34}, we obtain
$\big\||T-S|_\omega\big\|_{p,\omega}=\big\||T_{\omega}-S_{\omega}|\big\|_{p,\omega}$
for almost all $\omega\in\Omega$. By \cite[Proposition 2]{GK} for
any $\varepsilon>0$ there exists $\hat{f}\in L_p(\nabla,m)$ with
$|\hat{f}|_p=\mathbf{1}$ such that

$$\big\||T-S|\big\|-\varepsilon{\mathbf{1}}\leq\big ||T-S|\hat{f}\big|_p.$$
Then
\begin{eqnarray*}
\big\||T-S|\big\|(\omega)-\varepsilon{\mathbf{1}}&\leq&
\big||T-S|\hat{f}\big|_p(\omega)=\|(|T-S|\hat{f})(\omega)\|_{L_p(\nabla_\omega,m_\omega)}\\[2mm]
&=& \||T-S|_\omega f(\omega)\|_{L_p(\nabla_\omega,m_\omega)}\leq
\big\||T-S|_\omega\big\|_{p,\omega}\\[2mm]
&=&\big\||T_{\omega}-S_{\omega}|\big\|_{p,\omega}
\end{eqnarray*}
for almost all $\omega\in\Omega$. The arbitrariness of
$\varepsilon>0$  implies the statement. \end{proof}

\begin{cor}\label{42} Let $T,S : L_p({\nabla},m)\to L_p({\nabla},m)$
be two positive linear contractions such that
$T\mathbf{1}\leq\mathbf{1}$, $S\mathbf{1}\leq\mathbf{1}$. Then
$$
\big\||T_\omega-S_\omega|\big\|_{p,\omega}=
\big\||T-S|\big\|(\omega), \ \ \textrm{a.e.}
$$
\end{cor}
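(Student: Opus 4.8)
The statement to be proved, Corollary \ref{42}, is the equality, while Proposition \ref{41} already supplies the inequality $\||T_\omega-S_\omega|\|_{p,\omega}\geq \||T-S|\|(\omega)$ a.e. Hence the entire task reduces to the reverse inequality
$\||T_\omega-S_\omega|\|_{p,\omega}\leq \||T-S|\|(\omega)$ for almost all $\omega$. The plan is to first normalize the objects as in the proof of Proposition \ref{41}: since $A:=T-S$ is majorizable with majorant $T$ (because $|(T-S)\hat f|\leq T|\hat f|$) and $T$ is a contraction with $T\mathbf{1}\leq\mathbf{1}$, Theorem \ref{34} gives $\||T-S|_\omega\|_{p,\omega}=\||T_\omega-S_\omega|\|_{p,\omega}$ a.e. Moreover, the fiber operator $B_\omega:=|T-S|_\omega$ constructed in Theorem \ref{34} satisfies $(B\hat f)(\omega)=B_\omega f(\omega)$ a.e. for every $\hat f\in L_p(\nabla,m)$, where $B:=|T-S|$. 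So it suffices to show $\|B_\omega\|_{p,\omega}\leq \|B\|(\omega)$ a.e.

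Next I would reduce the reverse inequality to the production of a single, fiberwise almost-extremal global section. The key elementary observation is that if $|\hat f|_p\leq\mathbf{1}$, then $\|f(\omega)\|_{p,\omega}=|\hat f|_p(\omega)\leq 1$ a.e., and, using $(B\hat f)(\omega)=B_\omega f(\omega)$ together with $|B\hat f|_p\leq\|B\|\,|\hat f|_p\leq\|B\|$, one gets $\|B_\omega f(\omega)\|_{p,\omega}=|B\hat f|_p(\omega)\leq\|B\|(\omega)$ a.e. Consequently, if for each $\varepsilon>0$ I can exhibit an $\hat f$ with $|\hat f|_p\leq\mathbf{1}$ such that $\|B_\omega f(\omega)\|_{p,\omega}\geq\|B_\omega\|_{p,\omega}-\varepsilon$ for a.e. $\omega$, then $\|B\|(\omega)\geq\|B_\omega\|_{p,\omega}-\varepsilon$ a.e., and letting $\varepsilon\downarrow 0$ finishes the proof. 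Note this is the fiberwise analogue of the $\varepsilon$-extremal vector supplied globally by \cite[Proposition 2]{GK}; that cited result yields precisely the $\geq$ direction used in Proposition \ref{41}, so here it must be upgraded to a fiberwise statement.

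For the construction I would invoke the vector-valued lifting $\ell$ and the density of $\{\ell(\hat g)(\omega):\hat g\in L_\infty(\nabla,m)\}$ in each fiber $L_p(\nabla_\omega,m_\omega)$ (property (6)). Using the Dedekind completeness and countable sup property of $L_0(\Omega)$, write the order-supremum $\|B\|=\sup_n|B\hat f_n|_p$ for a sequence with $|\hat f_n|_p\leq\mathbf{1}$, so that $\|B\|(\omega)=\sup_n\|B_\omega f_n(\omega)\|_{p,\omega}$ a.e. To pass from this countable attaining family to the full fiber operator norm, I would select for each $\omega$ a near-maximizing unit vector of $B_\omega$, approximate it by lifted sections, and glue these local choices into one global section of $L_0$-norm at most $\mathbf{1}$, exploiting the $L_0(\Omega)$-module structure and the $d$-decomposability of $m$ along a refining sequence of partitions from $\mathcal{P}$.

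The \textbf{main obstacle} is exactly this measurable selection and gluing step: choosing almost-extremal unit vectors $g_\omega$ in the fibers $L_p(\nabla_\omega,m_\omega)$ --- which need not be separable --- so that they depend measurably on $\omega$ and are simultaneously realized by a single global section $\hat f$ with $|\hat f|_p\leq\mathbf{1}$. All the other steps are soft reductions; the genuine work lies in combining the lifting $\ell$, the density property (6), and the partition/exhaustion machinery to carry out the selection without losing the a.e. normalization $\|f(\omega)\|_{p,\omega}\leq 1$.
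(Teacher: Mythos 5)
Your reduction is sound, but your proof is incomplete, and the missing step is precisely the substance of the statement. Proposition \ref{41} gives $\big\||T_\omega-S_\omega|\big\|_{p,\omega}\geq\big\||T-S|\big\|(\omega)$ a.e., so everything rests on the reverse inequality $\|B_\omega\|_{p,\omega}\leq\|B\|(\omega)$ a.e., where $B=|T-S|$ and $B_\omega$ denotes its fiber operator. The paper does not reprove this inequality: its proof of the corollary is a one-line combination of Proposition \ref{41} with the already-published estimate \cite[Proposition 3.2]{GM}, which is exactly this reverse bound. You instead attempt to prove it from scratch, and your argument hinges on producing, for each $\varepsilon>0$, a single global section $\hat f$ with $|\hat f|_p\leq\mathbf{1}$ that is $\varepsilon$-extremal for $B_\omega$ in almost every fiber simultaneously; you explicitly leave this ``measurable selection and gluing'' step unproved. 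That is a genuine gap, not a technicality: the fibers $L_p(\nabla_\omega,m_\omega)$ need not be separable, so no measurable-selection theorem is available in this generality, and the only evident way to certify that such a section exists is to invoke \cite[Proposition 2]{GK} for $B$ together with the a.e.\ equality $\|B_\omega\|_{p,\omega}=\|B\|(\omega)$ --- i.e., the corollary itself, which is circular.

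Moreover, the obstacle is self-inflicted: the reverse inequality needs no extremal sections at all. The standard argument (the one behind \cite[Proposition 3.2]{GM}, and available from tools already in the paper) goes through the vector-valued lifting. Note that $\|B\|\leq 2\cdot\mathbf{1}$, so $\|B\|\in L_\infty(\Omega)$ and $\rho(\|B\|)$ is defined, and that $B$ maps $L_\infty(\nabla,m)$ into itself. For every $\hat u\in L_\infty(\nabla,m)$ one has the coset inequality $|B\hat u|_p\leq\|B\|\,|\hat u|_p$ in $L_0(\Omega)$; applying the lifting $\rho$ (monotone, multiplicative, with values defined at every point) together with property (2) of $\ell$ yields, for \emph{every} $\omega\in\Omega$ and not merely a.e.,
$$
\big\|\ell(B\hat u)(\omega)\big\|_{L_p(\nabla_\omega,m_\omega)}=\rho(|B\hat u|_p)(\omega)\leq\rho(\|B\|)(\omega)\,\rho(|\hat u|_p)(\omega)=\rho(\|B\|)(\omega)\,\big\|\ell(\hat u)(\omega)\big\|_{L_p(\nabla_\omega,m_\omega)}.
$$
Since $B_\omega$ is by construction (Theorem \ref{33}) the continuous extension of $\ell(\hat u)(\omega)\mapsto\ell(B\hat u)(\omega)$ from the dense subset $\{\ell(\hat u)(\omega):\hat u\in L_\infty(\nabla,m)\}$ of the fiber, it follows that $\|B_\omega\|_{p,\omega}\leq\rho(\|B\|)(\omega)$ for every $\omega$, hence $\leq\|B\|(\omega)$ a.e. The point you missed is that the lifting converts each coset inequality into a pointwise inequality valid everywhere, so the union-of-null-sets problem over the uncountable family of test vectors --- the very problem that pushes you toward measurable selection --- never arises. (A minor issue you inherit from the paper: the majorant of $T-S$ should be $T+S$ rather than $T$, since in general one only has $|(T-S)\hat g|\leq T|\hat g|+S|\hat g|$; this is harmless after rescaling by $1/2$ before applying Theorems \ref{33} and \ref{34}.)
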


The proof follows from \cite[Proposition 3.2]{GM} and Proposition
\ref{41}.

The next theorem is our main result of the present paper.

\begin{thm} Let $T : L_p({\nabla},m)\to L_p({\nabla},m)$
be a positive linear contraction such that
$T\mathbf{1}\leq\mathbf{1}$. If one has
$\big\||T^{m+1}-T^m|\big\|<2\cdot\mathbf{1}$ for some
$m\in{\mathbb{N}}\cup\{0\}$. Then
$$
(o)-\lim_{n\to\infty}\big\||T^{n+1}-T^{n}|\big\|=0.
$$
\end{thm}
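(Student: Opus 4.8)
The plan is to reduce the statement fiberwise to the classical strong "zero-two" law (Theorem \ref{D}) through the bundle representation, and then reassemble. First I would record that each power $T^n$ is again a positive linear contraction with $T^n\mathbf{1}\leq\mathbf{1}$: positivity together with $T\mathbf{1}\leq\mathbf{1}$ gives $T^{n}\mathbf 1=T(T^{n-1}\mathbf 1)\le T\mathbf 1\le\mathbf 1$ by induction, and $T^n$ is a contraction as a composition of contractions. Hence Theorem \ref{21} applies to every $T^n$, producing fiber operators $(T^n)_\omega$, and iterating the defining relation $(T\hat f)(\omega)=T_\omega f(\omega)$ yields the power identity $(T^n)_\omega=T_\omega^{\,n}$ for almost every $\omega$ (for each fixed $n$ this is a single a.e.\ statement).

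Next I would transport the hypothesis to the fibers. Applying Corollary \ref{42} to the pair $T^{m+1},T^m$ and invoking the power identity gives
$$\big\||T_\omega^{m+1}-T_\omega^{m}|\big\|_{p,\omega}=\big\||T^{m+1}-T^m|\big\|(\omega)<2$$
for almost every $\omega$. Thus for a.e.\ $\omega$ the classical positive contraction $T_\omega$ on the ordinary space $L_p(\nabla_\omega,m_\omega)$ satisfies the hypothesis of Theorem \ref{D}, whence $\lim_{n\to\infty}\big\||T_\omega^{n+1}-T_\omega^{n}|\big\|_{p,\omega}=0$ a.e. Applying Corollary \ref{42} once more, now to the pairs $T^{n+1},T^n$, and using the power identity converts this into the pointwise statement
$$g_n(\omega):=\big\||T^{n+1}-T^n|\big\|(\omega)=\big\||T_\omega^{n+1}-T_\omega^{n}|\big\|_{p,\omega}\longrightarrow 0\quad\textrm{a.e.}$$
At this point one must be careful about null sets: each of the countably many a.e.\ identities (the power identities and the two families of applications of Corollary \ref{42}) holds off a null set, so I would fix the full-measure set $\Omega_0$ on which all of them hold simultaneously, using that a countable union of null sets is null.

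Finally I would upgrade this a.e.\ convergence to $(o)$-convergence in $L_0(\Omega)$. The sequence $g_n$ is order bounded: since $T^{n+1}-T^n$ is majorizable with majorant $T^{n+1}+T^n$, Theorem \ref{32}(a) yields $0\le g_n\le\|T^{n+1}+T^n\|\le 2\cdot\mathbf 1$. Setting $h_N:=\bigvee_{n\ge N}g_n$, each $h_N$ lies in $L_0(\Omega)$ and agrees with the pointwise essential supremum, the sequence $(h_N)$ is decreasing, and by Dedekind completeness its infimum coincides with the pointwise infimum $\inf_N\sup_{n\ge N}g_n(\omega)=\limsup_n g_n(\omega)=0$ a.e. Hence $h_N\downarrow 0$ with $0\le g_n\le h_N$ for $n\ge N$, which is precisely $(o)$-$\lim_n g_n=0$, the desired conclusion.

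The main obstacle I expect is not any single computation but the bookkeeping that legitimizes the fiberwise passage: one must guarantee that the representation, the power identity, and both uses of Corollary \ref{42} hold on one common full-measure set before invoking the scalar theorem pointwise, and then recognize that pointwise convergence alone is insufficient---it is the order-boundedness by $2\cdot\mathbf 1$, supplied by majorizability via Theorem \ref{32}, that turns a.e.\ convergence into genuine $(o)$-convergence.
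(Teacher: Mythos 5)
Your proposal is correct and follows essentially the same route as the paper: transfer of the hypothesis to the fibers via Corollary \ref{42} and Theorem \ref{21}, application of the scalar strong ``zero-two'' law (Theorem \ref{D}) to each $T_\omega$, and transfer back via Corollary \ref{42}; your additional bookkeeping (the power identity $(T^n)_\omega=T_\omega^{\,n}$, the common full-measure set, the explicit passage from a.e.\ to $(o)$-convergence) merely fills in steps the paper leaves implicit. One small correction: the order bound $2\cdot\mathbf{1}$ from Theorem \ref{32} is not actually needed for the last step, since for sequences in $L_0(\Omega)$ a.e.\ convergence already implies $(o)$-convergence --- the tail suprema $h_N=\sup_{n\ge N}g_n$ are automatically finite a.e.\ because a convergent sequence of reals is bounded, so your claim that pointwise convergence alone would be insufficient is a mischaracterization, though the extra bound does no harm.
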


\begin{proof} From Corollary \ref{42} it follows that
$$
\big\||T^{m+1}_\omega-T^{m}_\omega|\big\|_{p,\omega}=
\big\||T^{m+1}-T^{m}|\big\|(\omega), \ \ \textrm{a.e.}
$$ on $\Omega.$ Therefore, due to $\big\||T^{m+1}-T^m|\big\|<2\cdot\mathbf{1}$ for
some $m\in{\mathbb{N}}\cup\{0\}$ we find
$\big\||T^{m+1}_\omega-T^{m}_\omega|\big\|_{p,\omega}<2$ for
almost all $\omega\in\Omega$. According to Theorem \ref{21} we
conclude that $T_\omega$ is a  positive contraction on
$L_p(\nabla_\omega,m_\omega).$ Hence, the contraction $T_\omega$
satisfies the conditions of Theorem \ref{D} for almost all
$\omega\in\Omega$, which yields that
$$\lim\limits_{n\rightarrow\infty}\big\||T_{\omega}^{n+1}-T_{\omega}^{n}|\big\|=0$$
for almost all $\omega\in\Omega$. Then again using Corollary
\ref{42} we obtain
$$\lim\limits_{n\rightarrow\infty}\big\||T^{n+1}-T^{n}|\big\|(\omega)=0$$
for almost all $\omega\in\Omega$. Therefore,
$$
(o)-\lim_{n\to\infty}\big\||T^{n+1}-T^{n}|\big\|=0.
$$
This completes the proof.
\end{proof}

\section*{Acknowledgement} The first  author  acknowledges  the MOE Grant FRGS13-071-0312. The second named author thanks the MOE grant
FRGS14-135-0376, and the Junior Associate scheme of the Abdus
Salam International Centre for Theoretical Physics, Trieste,
Italy.

\end{document}